\definecolor{labelkey}{rgb}{0,0,1}
\theoremstyle{plain}
\newtheorem{THEOREM}{Theorem}[section]
\newtheorem{theorem}[THEOREM]{Theorem}
\newtheorem{lemma}[THEOREM]{Lemma}
\theoremstyle{definition}
\newtheorem{definition}[THEOREM]{Definition}
\theoremstyle{remark}
\newcommand{\thm}[1]{Theorem~\ref{#1}}
\newcommand{\lem}[1]{Lemma~\ref{#1}}
\newcommand{\sect}[1]{Section~\ref{#1}}
\def \a {\alpha}
\def \d {\delta}
\def \e {\varepsilon}
\def \f {\varphi}
\def \n {\nabla}
\def \s {\sigma}
\def \th {\theta}
\def \Th {\Theta}
\def \O {\Omega}
\def \by {{\bf y}}
\def \cD {\mathcal{D}}
\def \cE {\mathcal{E}}
\def \cP {\mathcal{P}}
\def \cW {\mathcal{W}}
\newcommand{\N}{\ensuremath{\mathbb{N}}}   
\newcommand{\R}{\ensuremath{\mathbb{R}}}   
\renewcommand{\geq}{\geqslant}
\renewcommand{\leq}{\leqslant}
\DeclareMathOperator{\supp}{supp} %
\def \p {\partial}
\def \ra {\rightarrow}
\def \ss {\subset}
\renewcommand{\geq}{\geqslant}
\renewcommand{\leq}{\leqslant}
\def \ds  {\, \mbox{d}s}
\def \dmu  {\, \mbox{d}\mu}
\def \dnu  {\, \mbox{d}\nu}
\def \ddt  {\frac{\mbox{d\,\,}}{\mbox{d}t}}
\def \dpi {\, \mbox{d} \pi}
\def \dtpi {\, \mbox{d} \tpi}
\def \dtmu {\, \mbox{d} \tmu}
\def \Lip {\mathrm{Lip}}
\def \tpi {\tilde{\pi}}
\def \tmu {\tilde{\mu}}
\def \thmax {\th_{\mathrm{max}}}
\def \thmin {\th_{\mathrm{min}}}
\def \Ymax {Y_{\mathrm{max}}}
\def \Ymin {Y_{\mathrm{min}}}
\def \ymax {y_{\mathrm{max}}}
\def \ymin {y_{\mathrm{min}}}
\def \gmax {g_{\mathrm{max}}}
\def \gmin {g_{\mathrm{min}}}
\begin{document}

\title[Continuous model of opinion dynamics with convictions]{Continuous model of opinion dynamics with convictions}

\author{Vinh Nguyen}

\author{Roman Shvydkoy}

\address{Department of Mathematics, Statistics and Computer Science, University of Illinois at Chicago, 60607}

\email{vnguye66@uic.edu }

\email{shvydkoy@uic.edu}

\begin{abstract}
    In this note we study a new kinetic model of opinion dynamics. The model incorporates two forces -- alignment of opinions under all-to-all communication driving the system to a consensus, and Rayleigh type friction force that drives each `player' to its fixed conviction value. The balance between these forces creates a non-trivial limiting outcome.  

We establish existence of a global mono-opinion state, whereby any initial distribution of opinions for each conviction value aggregates to the Dirac measure concentrated on a single opinion. We identify several cases where such a state is unique and depends continuously on the initial distribution of convictions. Several regularity properties of the limiting distribution of opinions are presented.
\end{abstract}

\subjclass{91D30, 35Q49}

\thanks{\textbf{Acknowledgment.}  
	This work was  supported in part by NSF
	grant  DMS-2107956.}

\date{\today}
\maketitle

\section{Introduction}

In this note we study regularity and long time behavior of solutions to the following transport equation
\begin{equation}\label{e:opinion}
\p_t \mu + \p_y(u(\mu) \mu ) = 0,
\end{equation}
where $\mu = \mu(t,y,\th)$ is a measure on $\O = \R_+ \times \R_+$ for each $t\geq 0$, and 
\begin{align}\label{ }
u(\mu) & = \p_y (W\ast \mu + \s V), \\
W(y) & = - \frac12 y^2, \quad V(y,\th)  = \frac12 \th y^2 - \frac{1}{p+2} y^{p+2}.
\end{align}
Here, $\s$ and $p$ are positive parameters. The variable $\th$ can be thought of as a parameter as well, however, note that the convolution $W \ast \mu$ couples all the measures together across the family. 

The motivation for this particular model is twofold. First, it represents the kinetic counterpart  of the corresponding discrete dynamical system:
\begin{equation}\label{e:opiniondisc}
 \dot{y}_i = \frac1N \sum_{k=1}^N  (y_k - y_i ) + \s (\th_i - y_i^p) y_i,
\end{equation}
where $\th_i$'s are constant parameters.  In fact, the empirical distributions 
\[
\mu^N = \frac1N \sum_{i=1}^N \d_{\th_i} \otimes \d_{y_i(t)}
\]
solve \eqref{e:opinion} in the weak sense if and only if $y_i$'s solve \eqref{e:opiniondisc}, and formally the mean-field limit $\mu^N \to \mu$ yields a solution to \eqref{e:opinion}. The discrete system \eqref{e:opiniondisc} was derived in \cite{LRS-friction} as the effective limiting dynamics of the speeds $y_i = |v_i|$ of  agents governed by the corresponding alignment model with all-to-all communication
\begin{equation}\label{e:CSR}
\dot{x}_i = v_i,\qquad \dot{v}_i = \frac1N \sum_{k=1}^N  (v_k - v_i ) + \s (\th_i - |v_i|^p) v_i.
\end{equation}
When all velocities $v_i$ belong to a sector of opening less than $\pi$, the vectors $v_i$ will dynamically align themselves along one direction $v_i \sim y_i \hat{v}$, where $y_i = |v_i|$, and the evolution of $y_i$ is governed by \eqref{e:opiniondisc} up to an exponentially decaying force. The system \eqref{e:CSR} is a very important example of a collective behavior model of Cucker-Smale type with Rayleigh friction that appeared in many previous studies, \cite{Ha-friction,LRS-friction,DOrsogna,Sbook}.

Second, like its discrete counterpart, the equation \eqref{e:opinion} can be viewed as a mean-field model of opinion dynamics. Unlike many models existing in the literature, see \cite{KH-opinion,G2022,BVI2022} and literature therein, this particular version takes into account not only players'  opinions $y$'s, which are flexible quantities, but also convictions $\th$'s, which are fixed and not changing in time. The end game is to find an `agreement' which represents the limiting state of opinions. Such an agreement is not expected to be a perfect consensus due to attraction of each player it its own conviction but it must be one where further change would not benefit any of the players. In \cite{LRS-friction} it was proved that such a state exists, is unique and stable, and represents the Nash equilibrium relative to properly defined payoff functions. Let us state the result.

\begin{theorem}\label{t:opinion} For any positive set of parameters $(\th_1,\ldots,\th_N,\s)\in  \R^N_+ \times \R_+$ there exists a unique stable Nash equilibrium $\by^*=(y_1^*,\ldots,y_N^*) \in \R^N_+$ of system \eqref{e:opiniondisc} relative to payoffs
\begin{equation}\label{e:payoff}
p_i(\by) = \s \left( \frac12 \th_i y_i^2 - \frac{1}{p+2} y_i^{p+2}\right) - \frac{1}{2} \left( \bar{y} - y_i\right)^2, \quad \bar{y} = \frac1N \sum_j y_j.
\end{equation}
Any solutions with positive initial data will remain positive and converge to $\by^*$ as $t \to \infty$. Moreover, if $\th_i = \th_j$ then $y_i = y_j$.
\end{theorem}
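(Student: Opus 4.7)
The plan is to recognize \eqref{e:opiniondisc} as a gradient flow and reduce the theorem to a one-dimensional fixed-point problem. A direct computation shows that $\dot y_i = -\partial_{y_i} E(\by)$ for the potential
\[
E(\by) = -\sigma\sum_{i=1}^N \left(\tfrac{1}{2}\theta_i y_i^2 - \tfrac{1}{p+2} y_i^{p+2}\right) + \tfrac{1}{4N}\sum_{i,j=1}^N (y_i - y_j)^2,
\]
which, noting $\tfrac{1}{4N}\sum_{i,j}(y_i-y_j)^2 = \tfrac12 \sum_i (\bar y - y_i)^2$, is exactly $-\sum_i p_i$. The potential is coercive on $\R^N_+$ (the $y_i^{p+2}$ terms dominate) and bounded below. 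Positivity of orbits is preserved since on $\{y_i = 0\}$ one has $\dot y_i = \bar y \ge 0$, and $\dot y_i \ge -y_i(1+\sigma y_i^p)$ together with the uniform upper bound on $y_i(t)$ furnished by $E(\by(t))\le E(\by(0))$ prevents any coordinate from reaching zero. Hence any positive orbit stays in a compact subset of the open cone, and $E$ is a strict Lyapunov function off the equilibrium set.

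To find the equilibria, I set $c := \bar y$ and rewrite $(\bar y - y_i) + \sigma y_i(\theta_i - y_i^p) = 0$ as $h_{\theta_i}(y_i) = c$, where $h_\theta(y) := \sigma y^{p+1} + (1-\sigma\theta)y$. Since $h_\theta$ is strictly convex on $[0,\infty)$ with $h_\theta(0)=0$ and $h_\theta(y)\to\infty$, each $c>0$ admits a unique positive preimage $y=g(c,\theta)$; implicit differentiation gives $g' = 1/h_\theta' > 0$ and $g'' = -h_\theta''/(h_\theta')^3 < 0$, so $g(\cdot,\theta)$ is strictly increasing and strictly concave. The equilibrium condition thus reduces to the scalar fixed point $c = G(c) := \tfrac{1}{N}\sum_i g(c,\theta_i)$, where $G$ is concave, increasing, and $G(c)/c \to 0$ as $c \to \infty$. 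In every case $\Psi(c) := G(c) - c$ is strictly positive for some $c>0$: either $G(0)>0$ (if some $\sigma\theta_i>1$), or $G(0)=0$ with $G'(0^+) = \tfrac{1}{N}\sum_i(1-\sigma\theta_i)^{-1}>1$ (if all $\sigma\theta_i<1$, with the natural $\infty$ convention at equality). Strict concavity of $\Psi$ combined with $\Psi(\infty)=-\infty$ then produces a unique positive zero $c^*$, giving the unique positive equilibrium $y_i^* = g(c^*, \theta_i)$. The symmetry $\theta_i = \theta_j \Rightarrow y_i^* = y_j^*$ is immediate from this representation.

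Stability of $\by^*$ is shown by computing $\nabla^2 E(\by^*)$. Eliminating $\theta_i$ via the equilibrium identity $\sigma(\theta_i - (y_i^*)^p) = 1 - \bar y^*/y_i^*$, the Hessian reduces to $H_{ij} = \delta_{ij}\bigl(\sigma p (y_i^*)^p + \bar y^*/y_i^*\bigr) - 1/N$, and the Cauchy--Schwarz bound $(\sum_i v_i)^2 \le N \sum_i (\bar y^*/y_i^*) v_i^2$, which holds because $\sum_i y_i^*/\bar y^* = N$, gives $\bv^T H \bv \ge \sigma p \sum_i (y_i^*)^p v_i^2 > 0$ for $\bv \ne 0$. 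Thus $\by^*$ is a strict local minimum of $E$, in particular Lyapunov stable. Global convergence from positive data follows by LaSalle's invariance principle: the $\omega$-limit set of a positive orbit lies inside the critical set of $E$ in $\overline{\R^N_+}$, which consists only of $\by^*$ and the origin; the origin is excluded because near $\mathbf{0}$ the average satisfies $\dot{\bar y} \ge \tfrac{\sigma \theta_{\min}}{2}\bar y$, preventing $\bar y(t) \to 0$. Finally, reading \eqref{e:payoff} in the mean-field sense (each player treats $\bar y$ as exogenous), $\partial_{y_i} p_i = \sigma y_i(\theta_i - y_i^p) + (\bar y - y_i)$ vanishes exactly at $\by^*$, which provides the Nash interpretation. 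The main technical hurdle is the uniqueness step: the concavity/initial-slope analysis for $G$ has to work uniformly across the two regimes $\sigma \theta_i \le 1$ and $\sigma \theta_i > 1$, including a careful treatment of the boundary behavior as $c \to 0^+$.
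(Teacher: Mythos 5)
Your proposal addresses a statement that this paper does not actually prove: Theorem~\ref{t:opinion} is imported from \cite{LRS-friction}, and the only indication given of its proof is that the energy $\Phi(\by)=\sum_i p_i(\by)$ is not globally convex, so that existence and uniqueness are obtained there via Brouwer topological degree theory and the convergence via the Lojasiewicz--Simon inequality. Your route is genuinely different and, as far as I can check, correct in substance: you exploit the special one-dimensional structure of the equilibrium equations by writing them as $h_{\th_i}(y_i)=\bar y$ with $h_\th(y)=\s y^{p+1}+(1-\s\th)y$ strictly convex and vanishing at $0$, so each $y_i$ is a strictly increasing, strictly concave function $g(\bar y,\th_i)$ of the average, and uniqueness of the positive equilibrium reduces to the unique positive fixed point of the concave, sublinear map $G(c)=\frac1N\sum_i g(c,\th_i)$; your case analysis at $c\to 0^+$ ($G(0^+)>0$ versus $G(0^+)=0$ with $G'(0^+)>1$) correctly covers the two regimes $\s\th_i>1$ and $\s\th_i\le 1$. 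Combined with the pointwise Hessian positivity at the interior equilibrium (the Cauchy--Schwarz step using $\sum_i y_i^*=N\bar y^*$ is correct) and LaSalle on the bounded, positivity-preserving orbits --- legitimate here because the critical set in the closed orthant is exactly $\{\mathbf{0},\by^*\}$ and the origin is excluded by $\dot{\bar y}\ge \tfrac{\s\th_{\min}}{2}\bar y$ near $\mathbf{0}$ --- this bypasses both the degree argument and Lojasiewicz--Simon. What the cited approach buys is robustness (it does not rely on an explicit scalar reduction and controls the asymptotics even without knowing a priori that the equilibrium set is finite); what yours buys is an elementary, self-contained argument with the explicit representation $y_i^*=g(c^*,\th_i)$, from which monotonicity in $\th$ and the symmetry claim $\th_i=\th_j\Rightarrow y_i^*=y_j^*$ are immediate.

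One point to tighten: the Nash claim. With the payoff \eqref{e:payoff} read literally (so that $\bar y$ depends on $y_i$), one has $\partial_{y_i}p_i=\s y_i(\th_i-y_i^p)+\tfrac{N-1}{N}(\bar y-y_i)$, which at $\by^*$ equals $-\tfrac1N(\bar y^*-y_i^*)\ne 0$ unless $y_i^*=\bar y^*$; so your verification works only under the exogenous-$\bar y$ (best response to the mean) reading, which you should state as the definition being used, consistent with \cite{LRS-friction}. Moreover, a Nash equilibrium requires $y_i^*$ to be a maximizer of the best-response problem, not merely a critical point; this does hold and follows from the same convexity you already use, since for fixed $c=\bar y^*$ the derivative of the payoff in $y_i$ is $c-h_{\th_i}(y_i)$, which is nonnegative on $[0,y_i^*]$ and negative beyond, but it should be said explicitly.
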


The main difficulty in establishing the result is that the natural gradient structure of \eqref{e:opiniondisc}
\[
\dot{\by} = - \n \Phi(\by) 
\]
involves energy $\Phi(\by) = \sum_{i=1}^N p_i(\by)$ that is not globally convex. The approach of  \cite{LRS-friction} is based upon the Brouwer topological degree theory for the existence and uniqueness and the Lojasiewicz-Simon inequality for the asymptotic behavior.

The purpose of this present study is to recreate a similar result for the kinetic model \eqref{e:opinion}.  First, we justify it as the mean-field model of \eqref{e:opiniondisc} by establishing the limit $\mu^N \to \mu$. This is done by proving a general weak-Lipschitzness of the solution map $\mu_0 \to \mu_t$ with respect to the Wasserstein-1 metric, \cite{Ambrosio-book},
\[
\cW_1(\mu_t,\nu_t) \leq C e^{ct}\cW_1(\mu_0,\nu_0), \quad t>0,
\]
see \sect{s:mfl}. It also shows that there exists a global measure-valued solution to \eqref{e:opinion} for any initial data compactly supported inside $\O$.

To state the main result let us fix some notation. Let us observe that the  $\th$-marginal given by
\begin{equation}\label{e:pi}
\dpi(\th) = \int_{y\in \R_+} \dmu(y,\th,t),
\end{equation}
is conserved $\ddt \pi= 0$. This is a reflection of the principle that convictions do not change.  By the disintegration theorem,  see \cite{Ambrosio-book}, for $\pi$-a.e. $\th \in \R_+$ there is a unique family of probability `slicing' measures $\{\mu^\th\}$ on $\R_+$ such that $\mu = \mu^\th\otimes \dpi$, that is, 
\begin{equation}\label{slicing-ms}
\int_{\O} \f(y,\th) \dmu(y,\th) =\int_{\R_+}\int_{\R_+}   \f(y,\th) \dmu^\th(y)\dpi(\th), \quad \forall \f \in C_0(\O).
\end{equation}
Each measure $\mu^\th$ represents a distribution of opinions of agents having the same conviction $\th$.

Out main result states that each of these slicing measures approaches a mono-opinion state, i.e. concentrates on a Dirac mass at a fixed point $g(\th)$ for some smooth strictly increasing function $g$. In other words,
\[
\mu_t \to  \d_{g(\th)} \otimes \dpi(\th).
\]

 To put it formally we assume that our initial measure is located within a box compactly inside $\O$:
\begin{equation}\label{e:R0}
\supp \mu_0 \ss R_0: =  [\ymin,\ymax]\times[\thmin,\thmax], \qquad \ymin, \thmin>0.
\end{equation}
\begin{theorem}\label{t:main}
Let $\mu$ be the measure-valued solution to \eqref{e:opinion} with initial data satisfying \eqref{e:R0}. Then there exists a function $g\in C^\infty([\thmin,\thmax])$ strictly increasing such that
\begin{equation}\label{exp-th}
\sup_{\th \in \supp \pi}\cW_1 (\mu_t^\th, \d_{g(\th)}) \leq  C e^{-c t}, \quad t>0,
\end{equation}
where $C,c>0$ depend only on $\mu_0$ and the parameters of the model. Moreover, under the assumption
\begin{equation}\label{e:!}
\s \thmin > \frac{p+1}{p} \quad \text{or} \quad \frac{\thmax}{\thmin} < p+1.
\end{equation}
 the map $\pi \to g$ is Lipschitz,
\begin{equation}\label{e:gLip}
\sup_{\th \in [\thmin,\thmax] }|g(\th)-\tilde{g}(\th) |  \leq C \cW_1(\pi,\tpi).
\end{equation}
In particular $g$ is unique for each $\pi$.
\end{theorem}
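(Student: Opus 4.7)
My approach is to analyze the characteristic ODE
\begin{equation*}
\dot{y} = m(t) - y + \s\th y - \s y^{p+1}, \qquad m(t) = \int y \, d\mu_t,
\end{equation*}
along which $\mu_t$ is transported as a pushforward of $\mu_0$. First I would show that $\supp\mu_t$ remains in a compact subset of $\O$ bounded away from the axis $\{y=0\}$: treating $m(t)$ as a bounded forcing term, for each fixed $\th$ the one-dimensional ODE has a unique positive attracting equilibrium, yielding invariant $y$-intervals.

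To identify the limit, for $m>0$ and $\th\in[\thmin,\thmax]$ let $g_m(\th)$ be the unique positive root of $F(g;m,\th):=m-g+\s\th g-\s g^{p+1}=0$. Any mono-opinion stationary state $\d_{g(\th)}\otimes d\pi$ must satisfy $g=g_{m^*}$ with $m^*=\int g_{m^*}\,d\pi$; existence of $m^*$ then follows from a continuity/Brouwer argument applied to $m\mapsto\int g_m\,d\pi$ on a suitable invariant interval. Smoothness and strict monotonicity of $g$ are then immediate from the implicit function theorem: $\p_g F=-1+\s\th-\s(p+1)g^p<0$ at a stable root gives $C^\infty$ regularity, and direct differentiation yields $g'(\th)=\s g/(1-\s\th+\s(p+1)g^p)>0$.

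For the exponential rate \eqref{exp-th} I would compute
\begin{equation*}
\ddt[y-g(\th)]=(m(t)-m^*)+(y-g)\bigl[\s\th-1-\s(p+1)\xi^p\bigr],
\end{equation*}
with $\xi$ between $y$ and $g(\th)$. Using the equilibrium identity $\s g^p=m^*/g+\s\th-1$, the bracketed factor evaluated at $g$ equals $-p(\s\th-1)-(p+1)m^*/g$, which is strictly negative on the compact support. Combined with the bound $|m(t)-m^*|\leq\sup_{(y_0,\th)\in\supp\mu_0}|y(t;y_0,\th)-g(\th)|$, a Gr\"onwall argument on $E(t):=\sup|y-g|$ gives $E(t)\leq Ce^{-ct}$, and \eqref{exp-th} follows since $\cW_1(\mu_t^\th,\d_{g(\th)})\leq E(t)$ for $\pi$-a.e.\ $\th$.

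For the Lipschitz dependence of $g$ on $\pi$ under \eqref{e:!}, I would show the map $\Psi_\pi(m):=\int g_m(\th)\,d\pi(\th)$ is a strict contraction. Differentiation gives $\p_m g_m=1/(p(\s\th-1)+(p+1)m/g_m)$; the first alternative $\s\thmin>(p+1)/p$ yields $p(\s\th-1)>1$ outright, while the second alternative $\thmax/\thmin<p+1$ forces $g_m(\th)$ to stay sufficiently close to constant across $\supp\pi$ so that $m/g_m$ is bounded below by a positive constant. Either way $\Psi_\pi$ has Lipschitz constant $<1$, delivering uniqueness of $m^*$ and, via a further differentiation of $F(g;m^*,\th)=0$ in $m^*$, the bound \eqref{e:gLip}. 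The main obstacle is the two-way coupling of $m(t)$ with the characteristics: the convergence of $m$ cannot be decoupled from that of $y$, so the Gr\"onwall inequality for $E(t)$ has to close directly rather than iteratively; a secondary difficulty is handling the second branch of \eqref{e:!}, which does not yield pointwise contraction and requires tracking the ratio $m^*/g_m$ across $\supp\pi$.
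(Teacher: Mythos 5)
Your static analysis of the limit state (the relation $m^*-g+\s\th g-\s g^{p+1}=0$, smoothness and strict monotonicity via the implicit function theorem) matches the paper's Section 4, and your contraction argument for $m\mapsto\int g_m\,d\pi$ is a reasonable, essentially static, alternative to the paper's dynamic Lemmas \ref{l:unique}--\ref{l:stability} for the conditional part. The genuine gap is in the \emph{unconditional} claim \eqref{exp-th}. Your Gr\"onwall inequality runs on $E(t)=\sup_{\th,y_0}|Y_\th(t,y_0)-g(\th)|$ and must absorb the coupling via $|m(t)-m^*|\le E(t)$; since $\tfrac{d}{dt}(Y_\th-g)=(m(t)-m^*)+\bigl[\s\th-1-\s(p+1)\xi^p\bigr](Y_\th-g)$, the closed inequality has effective coefficient $\s\th-\s(p+1)\xi^p$ (the $-1$ from the alignment term is exactly cancelled by the $+1$ coming from the mean). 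Uniform negativity of \emph{that} coefficient is not available in general: with the usable lower bounds on $\xi^p$ (the invariant-interval bound $\gtrsim\thmin$, or the fiberwise bound $\xi^p\gtrsim \th-1/\s$ of Lemma \ref{l:Ythmin}) it is bounded by $\s\thmax-(p+1)\s\thmin$ or by $(p+1)-p\s\th$, i.e.\ it is negative precisely under one of the alternatives in \eqref{e:!}. So your plan, as written, proves exponential collapse to a mono-opinion state only under \eqref{e:!}, while the theorem asserts \eqref{exp-th} with no such restriction; your closing remark that the Gr\"onwall inequality ``has to close directly'' is exactly the point where it does not. The paper sidesteps this by estimating the within-fiber derivative $\p_y Y_\th$: because the coupling term $\int Y'\,d\mu_0$ is independent of $y$, its equation \eqref{e:Yy} carries the coefficient $(\s\th-1)-\s(p+1)Y_\th^p$ with no $+1$ penalty, which is eventually negative for \emph{every} $\th$ (small $\th$ via \eqref{e:Ymin}, large $\th$ via Lemma \ref{l:Ythmin}). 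That yields exponential squeezing of each fiber, hence existence of $g(\th)$ and \eqref{exp-th} unconditionally, with \eqref{e:!} reserved for uniqueness and the Lipschitz dependence on $\pi$.

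Two secondary points. First, even for the conditional part, the first alternative of \eqref{e:!} requires the sharper, time-dependent trajectory bound $Y_\th^p\ge \th-1/\s-Ce^{-ct}$ (Lemma \ref{l:Ythmin}), not just an invariant interval bounded away from zero, so you should state and prove that bound. Second, your treatment of the second alternative via a lower bound on $m^*/g_m$ is not sufficient as sketched (the crude estimate $m^*/g_m\ge (\thmin/\thmax)^{1/p}$ does not give a contraction for small $\thmin$); the clean route is to keep the denominator in the form $1-\s\th+\s(p+1)g^p$ and use $\s g^p\ge \s\thmin$ from \eqref{e:gminmax}, which gives $1-\s\thmax+(p+1)\s\thmin>1$ exactly when $\thmax<(p+1)\thmin$.
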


The proof of this theorem is a consequence of our results stated in Lemmas~\ref{l:mono}, \ref{l:unique}, and \ref{l:stability}.

 Structurally, the equation \eqref{e:opinion} can be considered as a fibered gradient system in the sense of \cite{PP2022} where the fibers are parametrized by convictions $\th$ and the free energy is given by
\begin{equation}\label{ }
\cE(\mu) =  \frac12 \int_{\R_+^2 \times \R_+^2} W(x-y) \dmu(y,\th) \dmu(x,\eta) - \s \int_{\R^2_+} V(y,\th) \dmu(y,\th).
\end{equation}
The equation can be written as a gradient dynamics
\[
\p_t \mu = - \p \cE(\mu),
\]
where $\p$ is understood as a fibered variant of the Fr\'echet subdifferential relative to a properly defined fibered Wasserstein distance. Without getting further into details one can obtain directly the following energy dissipation law
\begin{equation}\label{ }
\ddt \cE = - \int_{\R_+ \times \R_+}| u(\mu) |^2 \dmu(y,\th).
\end{equation}
The law demonstrates perpetual descent of the energy along solutions and suggests convergence to a local minimum. The general results of this nature were established in \cite{PP2022} under a properly formulated convexity condition on the energy. However, just as in the discrete case, such convexity is not always true in our settings, and therefore the statement of \thm{t:main} does not directly from the theory developed in \cite{PP2022}.
The novelty of our method is in detailed analysis of asymptotic behavior of characteristics of equation \eqref{e:opinion}.  Let us note that in the discrete case the uniqueness of the limiting state is unconditional, the fact that we have not been able to establish yet for kinetic counterpart. We leave this for future research.

\section{Well-posedness and mean-field limit}\label{s:mfl}
In this section, we will prove the existence of measure-valued solutions to the equation \eqref{e:opinion}. First of all, let us introduce some notations and definitions. Let $\O = \R_+^2$ and denote $\cP_0(\O)$ the set of probability measures on $\O$ which have compact support in the interior of $\O$.
\begin{definition}\label{ms-soln}
Given $0\leq T <\infty$, a map $\mu: [0,T) \ra \cP_0(\O), \; t\mapsto \mu_t$, is called a measure-valued solution to \eqref{e:opinion} with initial data $\mu_0$ if it satisfies the following conditions:
\begin{itemize}
    \item[i)] $\mu$ is weakly* continuous, 
\item[ii)] For any $\f \in C^{\infty}_0([0,T)\times \O)$ and $0<t<T$,
\[
\begin{split}
\int_{\O} \f(t,y,\th)\dmu_t(y,\th) =     \int_{\O}\f(0,y,\th) \dmu_0(y,\th) + \int_0^t \int_{\O} [\p_s\f +  u\,\p_y\f]\dmu_s(y,\th)\ds.
\end{split}
\]
\end{itemize}
\end{definition}

To make further notation simpler let us observe that by making the change of variables
\begin{equation}\label{e:change}
y \to \s^{\frac1p} y, \qquad \th \to \s \th, \qquad \mu \to \s^{1+ \frac1p} \mu
\end{equation}
we can scale out the parameter $\s$ from the equation altogether.  So, from now on we can assume that $\s = 1$, and be mindful that all the constants that appear later eventually depend on the original parameter $\s$.

If $\mu: [0,T) \ra \cP_0(\O)$ is a measure-valued solution to \eqref{e:opinion} with initial data $\mu_0$, by the classical transport theory, $\mu$ is a push-forward of $\mu_0$ along characteristics $(Y, \Th)$:  
\begin{align}
    \ddt Y(t,y,\th) &= \int_{\O} (Y'- Y)\dmu_0(y',\th') +  Y(\Th - Y^p),\quad Y(0,y,\th) = y, \label{e:Y}\\
        \ddt \Th(t,y,\th) &= 0, \quad \Th(0,y,\th) =\th\label{e:Th}.
\end{align}
Note that $\Th$ is not changing in time, so in the equation \eqref{e:Y} we can replace $\Th$ by its initial $\th$ and view $\th$ as a parameter.

The local well-posedness of the system \eqref{e:Y} - \eqref{e:Th} follows from the standard fixed point argument for integro-differential equations and local Lipschitzness relative to continuous maps $(Y,\Th)$    of the right hand side. Global well-posedness will follow as soon as we establish a priori bounds on the support of $Y$.

Our standing assumption on the initial support of $\mu_0$ will always be \eqref{e:R0}.  Let us denote
\[
\Ymax = \max_{R_0} Y(t,\cdot), \quad \Ymin = \min_{R_0} Y(t,\cdot).
\]
Note  that $\ymax = \Ymax(0)$ and $\ymin = \Ymin(0)$.
\begin{lemma}\label{ }
For any solution $Y$ to \eqref{e:Y} on a time interval $[0,T)$, we have for all $t<T$,
\begin{align}
\Ymax^p & \leq \frac{\thmax \ymax^p e^{p \thmax t}}{\thmax +  \ymax^p (e^{p \thmax t} - 1)} \label{e:Ymax}\\
\Ymin^p & \geq \frac{\thmin \ymin^p e^{p \thmin t}}{\thmin +  \ymin^p (e^{p \thmin t} - 1)} \label{e:Ymin}
\end{align}
\end{lemma}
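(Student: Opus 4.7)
The plan is to turn the characteristic equation \eqref{e:Y} into a scalar logistic differential inequality for each of the running extrema $\Ymax(t)$ and $\Ymin(t)$ by exploiting the fact that the alignment coupling has a definite sign at an extremum. To set this up, rewrite \eqref{e:Y} with $\bar Y(t) = \int_{\O} Y(t,y',\th') \dmu_0(y',\th')$ as
\[
\ddt Y(t,y,\th) = \bigl(\bar Y(t) - Y(t,y,\th)\bigr) + Y(t,y,\th)\bigl(\th - Y(t,y,\th)^p\bigr),
\]
which cleanly separates the mean-reverting alignment piece from the Rayleigh piece.

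The first step is to observe that since $R_0$ is compact and $(t,y,\th)\mapsto Y(t,y,\th)$ is jointly continuous and $C^1$ in $t$ with locally bounded $t$-derivative, both $\Ymax(t)$ and $\Ymin(t)$ are locally Lipschitz. By Danskin's theorem (or directly via the upper/lower Dini derivatives), at almost every $t$ one has
\[
\ddt \Ymax(t) = \ddt Y(t, y^*_t, \th^*_t), \qquad \ddt \Ymin(t) = \ddt Y(t, y_{*,t}, \th_{*,t}),
\]
where $(y^*_t,\th^*_t)$ is any maximizer and $(y_{*,t},\th_{*,t})$ any minimizer of $Y(t,\cdot)$ on $R_0$.

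The second step is the sign observation. At a maximizer, $\bar Y(t) \leq \Ymax(t)$ trivially since $\bar Y$ is an average bounded by the max, and $\th^*_t \leq \thmax$ by \eqref{e:R0}, so
\[
\ddt \Ymax \leq \Ymax(\thmax - \Ymax^p).
\]
At a minimizer the opposite inequalities $\bar Y - \Ymin \geq 0$ and $\th_{*,t} \geq \thmin$ give
\[
\ddt \Ymin \geq \Ymin(\thmin - \Ymin^p),
\]
and positivity of $\Ymin$ is preserved because both the Rayleigh term at $\Ymin=0^+$ and the alignment term $\bar Y - \Ymin$ are nonnegative when $\Ymin$ is small.

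To finish, substitute $z = \Ymax^p$, turning the first inequality into the logistic form $\dot z \leq p z(\thmax - z)$ with $z(0) = \ymax^p$; ODE comparison with $\dot w = p w(\thmax - w)$, $w(0) = \ymax^p$, whose explicit solution is exactly the right-hand side of \eqref{e:Ymax}, gives the upper bound. The lower bound \eqref{e:Ymin} follows by the mirror argument applied to $\zeta = \Ymin^p$. The only genuinely delicate point is the envelope differentiation of a parametrized maximum/minimum, which is routine given compactness of $R_0$ and the joint regularity of the characteristic flow; the conceptual heart of the proof is the one-line observation that the non-local coupling $\bar Y - Y$ has a definite sign at extrema and therefore drops out of the inequality, reducing the whole system to a scalar logistic ODE at those points.
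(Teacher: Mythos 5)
Your proof is correct and follows essentially the same route as the paper: evaluate the characteristic equation at a running extremum over $R_0$, drop the alignment term $\bar Y - Y$ using its definite sign there, pass to $z=Y^p$ to get the logistic inequality $\dot z \lessgtr p z(\th_{\mathrm{max/min}} - z)$, and conclude by ODE comparison with the explicit logistic solution. The only difference is that you make explicit the (routine) justification for differentiating the running max/min, which the paper leaves implicit.
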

\begin{proof}
Evaluating  \eqref{e:Y} at a point of maximum on $R_0$ we obtain
\[
\begin{split}
    \ddt \Ymax^p &= p \Ymax^{p-1}\underbrace{  \int_{\O} (Y'- \Ymax)\dmu_0(y',\th')}_{\leq 0} +  \Ymax^p (\th - \Ymax^p)\\
    &\leq  p \Ymax^p( \thmax -\Ymax^p).
\end{split}
\]
The right hand side of \eqref{e:Ymax} solves the above equation exactly. So, by the classical comparison principle, we obtain \eqref{e:Ymax}. 

Similarly,
\[
\begin{split}
    \ddt \Ymin^p &= p \Ymin^{p-1}\underbrace{  \int_{\O} (Y'- \Ymin)\dmu_0(y',\th')}_{\geq 0} +  \Ymin^p (\th - \Ymin^p)\\
    &\geq  p \Ymin^p( \thmin -\Ymin^p).
\end{split}
\]
The comparison principle implies \eqref{e:Ymin}.
\end{proof}
The lemma shows that on any finite time interval the characteristics will not leave $\O$ and in fact the image $Y(t,\supp \mu_0)$ will be compactly embedded in $\O$ and remain uniformly bounded a priori. Consequently, by extension, the system \eqref{e:Y} - \eqref{e:Th} is globally well-posed. By the push-forward transport, there is a global measure-valued solution to \eqref{e:opinion}.
\begin{theorem}\label{t:gwp}
Given any measure $\mu_0 \in \cP_0(\O)$ with \eqref{e:R0} there exists a unique measure-valued solution to \eqref{e:opinion} with initial condition $\mu_0$ and such that $\supp \mu_t \ss  \O$ remains bounded and bounded away from $\p \O$ uniformly for all times.
\end{theorem}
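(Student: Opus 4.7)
The plan is to construct the measure-valued solution as the push-forward of $\mu_0$ along the characteristic flow \eqref{e:Y}--\eqref{e:Th}, using the uniform confinement \eqref{e:Ymax}--\eqref{e:Ymin} to extend globally and to control the support. Since $\Th(t,y,\th) \equiv \th$, the problem reduces to solving the integro-ODE \eqref{e:Y} for $Y(t,\cdot,\cdot) \in C(R_0)$, and because $\mu_0$ is a probability measure the nonlocal term simplifies to $\bar Y(t) - Y(t,y,\th)$ with $\bar Y(t) := \int_\O Y(t,y',\th')\dmu_0(y',\th')$. The right-hand side is thus a polynomial in $Y$ plus the bounded linear functional $\bar Y$, hence locally Lipschitz on bounded balls of $C([0,T_0];C(R_0))$. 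A standard Banach fixed point applied to the Picard operator
\[
\Phi[Y](t,y,\th) = y + \int_0^t\left[\bar Y(s) - Y(s,y,\th) + Y(s,y,\th)(\th - Y(s,y,\th)^p)\right]\ds
\]
delivers unique local existence.

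To globalize, I would invoke the preceding lemma: on any interval of existence the range $Y(t,\cdot,\cdot)$ is confined to a compact rectangle in $\O$ bounded away from $\p\O$, and in fact the right-hand side of \eqref{e:Ymax} is the logistic solution of $\dot z = pz(\thmax - z)$, which stays in $[0,\max\{\thmax,\ymax^p\}]$ for all $t \geq 0$, and analogously for \eqref{e:Ymin}. Since the local existence time depends only on the size of this rectangle, the characteristics extend uniquely to $[0,\infty)$ with uniformly compact range. Setting $\mu_t := (Y(t,\cdot,\cdot),\id)_\#\mu_0$ then gives weak-$*$ continuity from continuity of the flow, and the required support bounds. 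Verifying the weak identity in Definition \ref{ms-soln} is a direct chain-rule computation: for $\f \in C_0^\infty([0,T)\times\O)$,
\[
\ddt \int_\O \f(t,y,\th)\dmu_t = \int_\O \left( \p_t\f + \dot Y\,\p_y\f\right)(t,Y,\th)\dmu_0 = \int_\O \left(\p_t\f + u(\mu_t)\,\p_y\f\right)\dmu_t,
\]
where in the last step one uses $\dot Y = u(\mu_t)(Y,\th)$, itself a consequence of the push-forward identity $\bar Y(t) = \int y'\dmu_t$ and the definition $u(\mu) = \p_y(W\ast\mu+V)$ with $\s=1$ after the rescaling \eqref{e:change}.

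Uniqueness follows from the classical superposition principle for continuity equations with a locally Lipschitz velocity: any measure solution $\nu$ with $\nu_0 = \mu_0$ is the push-forward of $\mu_0$ along its own flow $\tilde Y$, which again satisfies \eqref{e:Y} once one substitutes $\int y'\dnu_t = \int \tilde Y\dmu_0$, and therefore coincides with $Y$ by the local uniqueness from the fixed-point step. The main subtlety here is conceptual rather than computational: the observation that, via the push-forward, the nonlocal term in \eqref{e:Y} depends only on $\mu_0$, turning what looks like a Vlasov-type self-interacting system into a genuinely self-contained integro-ODE amenable to a one-shot Banach contraction argument.
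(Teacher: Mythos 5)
Your proposal is correct and follows essentially the same route as the paper's primary argument: local existence for the characteristic system \eqref{e:Y}--\eqref{e:Th} by a fixed-point argument, globalization via the a priori confinement bounds \eqref{e:Ymax}--\eqref{e:Ymin}, construction of $\mu_t$ as the push-forward of $\mu_0$, and uniqueness from classical transport theory, including the key observation that the nonlocal term closes into a self-contained integro-ODE against $\mu_0$. (The paper additionally records a second proof via empirical-measure approximation and the Wasserstein stability lemma, but your argument matches the main one.)
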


Let us now show continuity of the map $\mu_0 \to \mu_t$ in weak topology, which is the basis for justification of the mean-field limit.
\begin{lemma}\label{stability}
Let $\mu$ and $\nu$ be two measure-valued solutions to \eqref{e:opinion} with  $\mu_0,\nu_0$ satisfying \eqref{e:R0}. 
Then for any $t > 0 $ one has
\[
\cW_1(\mu_t,\nu_t) \leq C e^{ct}\cW_1(\mu_0,\nu_0),
\]
where $C,c>0$ depend on the initial condition and the parameters of the model.
\end{lemma}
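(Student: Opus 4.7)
The plan is to propagate an optimal coupling along characteristics and close a Gronwall estimate. Let $\pi_0$ be an optimal transport plan between $\mu_0$ and $\nu_0$ with respect to the $\ell^1$ product metric $|y_1-y_2|+|\th_1-\th_2|$, and let $Y^\mu,Y^\nu$ denote the characteristic flows of \eqref{e:Y} associated with $\mu$ and $\nu$. Pushing $\pi_0$ forward by the map $((y_1,\th_1),(y_2,\th_2))\mapsto ((Y^\mu(t,y_1,\th_1),\th_1),(Y^\nu(t,y_2,\th_2),\th_2))$ yields a (generally non-optimal) coupling between $\mu_t$ and $\nu_t$, hence
\[
\cW_1(\mu_t,\nu_t)\leq D(t)+\int |\th_1-\th_2|\dd\pi_0, \qquad D(t):=\int |Y^\mu(t,y_1,\th_1)-Y^\nu(t,y_2,\th_2)|\dd\pi_0,
\]
and the second integral is constant in time and bounded by $\cW_1(\mu_0,\nu_0)$.

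Next I would differentiate $D(t)$ using \eqref{e:Y}. Writing the characteristic equation in terms of the first $y$-moment, $\bar{y}_\mu(t)=\int y\,\dd\mu_t$, one has
\[
\ddt(Y^\mu-Y^\nu)=(\bar{y}_\mu-\bar{y}_\nu)-(Y^\mu-Y^\nu)+\th_1(Y^\mu-Y^\nu)+Y^\nu(\th_1-\th_2)-\bigl((Y^\mu)^{p+1}-(Y^\nu)^{p+1}\bigr).
\]
By \thm{t:gwp}, the supports of $\mu_t,\nu_t$ remain uniformly bounded, so $|(Y^\mu)^{p+1}-(Y^\nu)^{p+1}|\leq (p+1)M^p|Y^\mu-Y^\nu|$ and $Y^\nu\leq M$ for some constant $M$ depending only on initial data. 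The crucial point is the change-of-variables identity
\[
\bar{y}_\mu(t)-\bar{y}_\nu(t)=\int\bigl(Y^\mu(t,y_1,\th_1)-Y^\nu(t,y_2,\th_2)\bigr)\dd\pi_0,
\]
so $|\bar{y}_\mu(t)-\bar{y}_\nu(t)|\leq D(t)$. Integrating the pointwise estimate against $\pi_0$ and inserting this bound produces a self-contained inequality
\[
\ddt D(t)\leq C_1 D(t)+C_2\int|\th_1-\th_2|\dd\pi_0\leq C_1 D(t)+C_2\cW_1(\mu_0,\nu_0).
\]

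Since $D(0)\leq\cW_1(\mu_0,\nu_0)$, Gr\"onwall's Lemma gives $D(t)\leq C e^{C_1 t}\cW_1(\mu_0,\nu_0)$, and combined with the first display this yields the claim.

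The main obstacle is really just this circularity: the naive bound $|\bar{y}_\mu-\bar{y}_\nu|\leq\cW_1(\mu_t,\nu_t)$ would require us to control $\cW_1(\mu_t,\nu_t)$ by itself, whereas the push-forward identity closes the loop by controlling $|\bar{y}_\mu-\bar{y}_\nu|$ directly by $D(t)$. The rest is routine Gr\"onwall, using only the uniform $L^\infty$ bound on characteristics from the preceding lemma to tame the $(Y)^{p+1}$ and $Y\th$ nonlinearities.
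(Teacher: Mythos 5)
Your argument is correct, and it goes by a genuinely different route than the paper. You work with the primal (coupling) formulation: push an optimal plan $\pi_0$ forward by the two flows and close a Gr\"onwall inequality for the integrated quantity $D(t)=\int |Y^\mu-Y^\nu|\,\dd\pi_0$, the key point being the marginal identity $\bar y_\mu(t)-\bar y_\nu(t)=\int (Y^\mu-Y^\nu)\,\dd\pi_0$, which controls the coupling term by $D(t)$ itself and breaks the apparent circularity; the nonlinear terms are tamed by the uniform-in-time support bounds of \thm{t:gwp} (which indeed follow from \eqref{e:Ymax}--\eqref{e:Ymin}, so your constant $M$ is legitimate), and $D(0)\le \cW_1(\mu_0,\nu_0)$ finishes the estimate. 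This is the classical Dobrushin-type mean-field stability argument. The paper instead uses the Kantorovich--Rubinstein duality: it compares the two characteristics $Y$ and $Z$ started at the \emph{same} point $(y,\th)$ but driven by $\mu$ and $\nu$, splits the test-function integral so that one piece is controlled by the Lipschitz constant of the flow map, and therefore needs the auxiliary gradient bound $\|\n Y\|_{L^\infty}\le Ce^{ct}$ of \eqref{exp-gradY} together with a sup-norm Gr\"onwall estimate for $\|Y-Z\|_{L^\infty}$. Your route is somewhat leaner — no differentiation of the flow in $(y,\th)$ is required, only pointwise Lipschitz bounds on the velocity on the invariant region, so it would extend to rougher kernels — while the paper's route produces as byproducts the sup-norm quantities ($\|\n Y\|_\infty$, $\|Y-Z\|_\infty$) that are of the same type as those reused in the later asymptotic analysis (e.g.\ \eqref{e:expY} and the $\cD(t)$ estimates). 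Two cosmetic points: your $\cW_1$ is taken with respect to the $\ell^1$ product metric, which only changes the constant $C$; and the differentiation of $|Y^\mu-Y^\nu|$ under the integral should be read as an a.e./upper-derivative statement, which is at the same level of informality as the paper's ``evaluate at a point of maximum.''
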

\begin{proof}
Denote $ L^\infty := L^\infty (R_0)$.
Let us also denote by $Y$ the characteristics of $\mu$ and by $Z$ the characteristics of $\nu$.

In what follows, $C$ and $ c$ are constants which are varying line by line. By the definition of the Wasserstein distance, we have
\begin{align}\label{est-W1}
\cW_1(\mu_t,\nu_t) &=\sup_{\| \f \|_\Lip\leq 1}\left| \int_{\O} \f(y,\th)\dmu_t(y,\th) - \int_{\O} \f(y,\th)\dnu_t(y,\th)\right|\notag\\
& = \sup_{\| \f \|_\Lip\leq 1}\left| \int_{\O} \f(Y,\th)\dmu_0(y,\th) - \int_{\O} \f(Z,\th)\dnu_0(y,\th)\right| \notag\\
& = \sup_{\| \f \|_\Lip\leq 1}\left| \int_{\O} \f(Y,\th)\dmu_0(y,\th) - \int_{\O} \f(Y,\th)\dnu_0(y,\th)+ \int_{\O} [\f(Y,\th) -\f(Z,\th) ]\dnu_0(y,\th) \right| \notag\\
&\leq (1+\|\n Y\|_{\infty})\cW_1(\mu_0,\nu_0)+\int_{\O} |Y - Z |\dnu_0(y,\th) \notag\\
&\leq  ( 1+ \|\n Y\|_{\infty})\cW_1(\mu_0,\nu_0) + \| Y - Z  \|_{{\infty}}.
\end{align}

The proof reduces to the estimation of $\|\n Y\|_{\infty}$ and $ \| Y - Z  \|_{{\infty}}$.

Taking the gradient 
\[
\n Y = \left(\p_y Y,\p_\th Y \right)
\]
of \eqref{e:Y} we obtain
\[
\ddt \n Y = - \n Y +\th\n Y + (0, Y) - (p+1)Y^p \n Y.
\]
Evaluating at a point where $\|\n Y \|_{\infty}$ is achieved, 
\begin{equation}\label{ess-Y}
\ddt \|\n Y \|_{\infty} \leq - (1-\th)\|\n Y\|_{\infty} - (p+1)Y^p \|\n Y\|_{\infty} + \| Y\|_{\infty}.
\end{equation}
By \eqref{e:Ymax}, 
\[
\ddt \|\n Y \|_{\infty} \leq C \|\n Y\|_{\infty}  + C,
\]
and hence,
\begin{equation}\label{exp-gradY}
\|\n Y \|_{L^\infty} \leq C e^{c t}.
\end{equation}

Now let us compute the derivative of $ \| Y - Z\|_{\infty}$. We have
\[
\begin{split}
\ddt  (Y - Z ) &=
\int_{\O} (Y'-Y)\dmu_0(y',\th') -\int_{\O} (Z'-Z)\dnu_0(y',\th') \\
& \quad + (\th - Y^p)Y - (\th - Z^p) Z\\
&=\int_{\O} Y' \dmu_0(y',\th')-\int_{\O} Y' \dnu_0(y',\th') + \int_{\O} Y' \dnu_0(y',\th')-\int_{\O} Z'\dnu_0(y',\th') \\
&\quad + ( \th - 1)(Y - Z) - ( Y^{p+1} - Z^{p+1}).
\end{split}
\]
Evaluating at a point of maximum and noting that $Y^{p+1} - Z^{p+1} = (p+1) \tilde{Y}^p (Y- Z)$ for some $\tilde{Y}$  between $Y$ and $Z$ we obtain
\begin{equation}\label{}
\begin{split}
    \ddt  \| Y - Z \|_{\infty} &  \leq  \|\n Y\|_{\infty}\cW_1(\mu_0,\nu_0) + (| \th - 1| + 1) \| Y - Z \|_{\infty} -  (p+1) \tilde{Y}^p \| Y - Z \|_{\infty}  \\
    & \leq \|\n Y\|_{\infty}\cW_1(\mu_0,\nu_0) + C \| Y - Z \|_{\infty}.
\end{split}
\end{equation}
Combining with \eqref{exp-gradY}  and by Gr\"onwall's lemma, it implies that
\begin{equation}\label{exp-infty-dist-Y}
\| Y - Z \|_{{\infty}} \leq Ce^{ct}\cW_1(\mu_0,\nu_0) .
\end{equation}
where $c$ is a constant depending on $\s$ and the supports of $\mu_0, \nu_0$ with respect to $\th$. 
Therefore, plugging \eqref{exp-infty-dist-Y} and \eqref{exp-gradY} into \eqref{est-W1} we obtain
\begin{equation}
    \cW_1(\mu_t,\nu_t) \leq Ce^{ct} \cW_1(\mu_0,\nu_0)
    \end{equation}
which concludes the lemma.
\end{proof}

For any $N\in \N$, if $\{(y_i, \th_i)\}_{i= 1,\ldots, N}$ is a solution to the system \eqref{e:opiniondisc} with the initial conditions $y_i(0) = y_i^0, \th_i(0) = \th_i$, then 
\[
\mu^N_t :=  \frac1N \sum_{i=1}^N \d_{y_i(t)} \otimes \d_{\th_i},
\]
is a measure-valued solution to \eqref{e:opinion} with the initial condition
\[
\mu^N_0 =  \frac1N \sum_{i=1}^N \d_{y_i^0} \otimes \d_{\th_i}.
\]

So, if $\mu^N_0 \to \mu_0$ weakly, then by \lem{stability}, $\mu^N_t \to \mu_t$, for any $t>0$.  Which justifies the weak approximation by empirical measures.

This method can be used to give an alternative proof of global existence for \eqref{e:opinion} without the use of general characteristics $Y$ and simply based on the fact that the discrete system \eqref{e:opiniondisc} is globally well-posed. 

\begin{proof}[Another proof of \thm{t:gwp}]

Let us pick any weak$^*$-approximation of $\mu_0$ by empirical measures 
\[
\mu_0^N = \sum_{k= 1}^N m_k \d_{y^0_k}\otimes\d_{\th_k} \to \mu_0.
\]
Let 
\[
\mu_t^N := \sum_{k= 1}^N m_k \d_{y_k(t)}\otimes\d_{\th_k}.
\]
Since $\mu^N$ is a measure-valued solution to \eqref{e:opinion} with the initial data $\mu_0^N$ we apply Lemma \ref{stability} to get
\[
\cW_1(\mu_t^{N}, \mu_t^{M}) \leq Ce^T \cW_1(\mu_0^{N}, \mu_0^{M}), \quad \text{for } N,M>0, \ t\leq T.
\]
Hence $\{\mu_t^{N}\}_i$  is weakly$^*$-Cauchy, and consequently there is a limit $\mu_t^N \to \mu_t\in \cP_+(\O)$, and moreover
\begin{equation}\label{e:muNmu}
\cW_1(\mu_t^{N}, \mu_t) \leq C_T \cW_1(\mu_0^{N}, \mu_0), \quad \text{for } N>0, \ t\leq T.
\end{equation}
In particular, this implies weak$^*$-continuity of the map $t \to \mu_t$.

We will show that this $\mu$ is a measure-valued solution to \eqref{e:opinion} with the given initial $\mu_0$. 

 Because $\mu^N$ is a measure-valued solution, for any test function $\f \in C^{\infty}_0([0,T)\times \O)$,
\begin{equation}\label{e:approx-soln}
\int_{\O} \f(t,y,\th)\dmu^N_t(y,\th) =     \int_{\O}\f(0,y,\th) \dmu^N_0(y,\th) + \int_0^t \int_{\O} [\p_s\f +  u_s^N \p_y\f]\dmu^N_s(y,\th)\ds,
\end{equation}
where 
\[
u_s^N = \int_{\O} y' \dmu_s^N(y',\th')  - y + (\th - y^p)y: =  P^N(s) + F(y,\th).
\]
The linear terms all converge to the natural limits by weak convergence.  Since $R$ is a fixed continuous function  we also have
and 
\[
\int_0^t \int_{\O} F\p_y\f\dmu^N_s(y,\th)\ds\longrightarrow \int_0^t \int_{\O} F\p_y\f\dmu_s(y,\th)\ds \quad \text{ as } N\ra \infty
\]
Note that the moments $P^N(s)$ is just a sequence of numbers for which we have by \eqref{e:muNmu}
\[
| P^N(s) - P(s) | = \left| \int_{\O} y' ( \dmu_s^N(y',\th') - \dmu_s(y',\th')) \right| \leq \cW_1(\mu_s^N,\mu_s) \leq C_T \cW_1(\mu_0^N, \mu_0) \to 0.
\]
So, $P^N \to P$ uniformly on $[0,T)$. Consequently, 
\[
 \int_0^t \int_{\O} P^N(s)  \p_y\f \dmu^N_s(y,\th)\ds \to  \int_0^t \int_{\O} P(s)  \p_y\f \dmu_s(y,\th)\ds .
 \]
It follows that $\mu$ satisfies (ii). 
\end{proof}

\section{Existence and uniqueness of the mono-opinion state}
Let $\mu$ be a measure-valued solution to \eqref{e:opinion} with the initial $\mu_0$. Let
$\pi$ be its time-independent conviction marginal \eqref{e:pi}.

  Let us derive the equation for $\mu^\th$. By Definition \ref{ms-soln} and \eqref{slicing-ms},
for any $\f \in C^{\infty}_0([0,T)\times \O)$ and $0<t<T$ one has
\[
\begin{split}
\int_{\R_+}\int_{\R_+}  \f(t,y,\th)\dmu^\th_t(y)\dpi(\th) &=     \int_{\R_+}\int_{\R_+}   \f(0,y,\th) \dmu^\th_0(y)\dpi(\th) \\ &\quad \quad\quad + \int_0^t \int_{\R_+}\int_{\R_+}   [\p_s\f +  u_s \p_y\f]\dmu^\th_s(y)\dpi(\th)\ds.
\end{split}
\]
Note that 
\[
F(\mu_s)(y,\th) = \int_{\O}(z- y)\dmu_s(z,\eta) = \int_{\R_+}\int_{\R_+} (z-y)\dmu^\eta_s(z)\dpi(\eta).
\]
It implies that for each $\th$, the probability measure $\mu^\th$ is a measure-valued solution with the initial $\mu^\th_0$ to the equation
\begin{equation}\label{e:slicing-ms}
\p_t \mu^\th + \p_y \left[u \mu^\th \right]  = 0,
\end{equation}
where 
\[
u(t, y,\th) =  \int_{\O} (z-y)\dmu_t^\eta(z)\dpi(\eta) +  (\th - y^p)y. 
\]
Note that the family of equations are all coupled  through the velocity $u$, but otherwise represent transport of each individual slicing measure $\mu^\th$. The characteristics that transport $\mu^\th$, denoted $Y_\th$ are nothing but $Y_\th(t,y) = Y(t,y,\th)$ as defined by \eqref{e:Y}.  We will view them, however, as individual trajectories satisfying the coupled system
\begin{equation}\label{e:Yth}
    \ddt Y_{\th} = \int_{\R_+}\int_{\R_+}(Y'_{{\th'}} - Y_{\th} )\dmu^{\th '}_0 (y') \dpi(\th') + (\th - Y^p_{\th})Y_{\th}.
\end{equation}
In particular we will derive an individual comparison bound from below as an alternative to global \eqref{e:Ymin}.
\begin{lemma}\label{l:Ythmin}
    For any $\th \in [\thmin, \thmax]$ such that $\th > 1$ one has
    \begin{equation}\label{e:Ythmin}
        Y^p_\th(t,y) \geq 
        \frac{y^p ( \th - 1) e^{p(\th - 1) t}}{( \th - 1) +   y^p (e^{p(\th - 1) t} -1)  },\quad \forall t\geq 0, \ \forall y>0.
    \end{equation}    
\end{lemma}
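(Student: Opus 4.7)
The plan is to extract from \eqref{e:Yth} a closed differential inequality for $Y_\th$ alone by using the non-negativity of all characteristics, and then compare with the explicit logistic ODE whose solution is the right-hand side of \eqref{e:Ythmin}.

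First I would observe that by \thm{t:gwp} the characteristics $Y_{\th'}(t,y')$ are bounded away from $0$ uniformly for $(y',\th')\in\supp\mu_0$, so in particular $Y'_{\th'}\geq 0$ for all $t\geq 0$. Dropping this non-negative term in \eqref{e:Yth} yields
\[
\ddt Y_\th = \int_{\R_+}\int_{\R_+} Y'_{\th'}\,\dmu_0^{\th'}(y')\dpi(\th') - Y_\th + (\th - Y_\th^p)Y_\th \geq -Y_\th + (\th - Y_\th^p)Y_\th = (\th-1)Y_\th - Y_\th^{p+1}.
\]
Since $Y_\th\geq 0$, multiplying by $pY_\th^{p-1}$ preserves the inequality, so with $Z:=Y_\th^p$ I obtain the scalar autonomous differential inequality
\[
\dot Z \geq p(\th-1) Z - p Z^2, \qquad Z(0) = y^p.
\]

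Next, I would write down the solution $z(t)$ of the corresponding logistic equation $\dot z = p(\th-1)z - pz^2$ with $z(0) = y^p$. Because $\th>1$ the equilibrium $\th-1$ is positive, and the usual separation of variables (or the substitution $w = 1/z$) gives the explicit formula
\[
z(t) = \frac{y^p(\th-1) e^{p(\th-1)t}}{(\th-1) + y^p\bigl(e^{p(\th-1)t} -1\bigr)},
\]
which is precisely the right-hand side of \eqref{e:Ythmin}. A direct check shows $z(0) = y^p$ and that $z$ solves the logistic ODE, so the claim $Z(t) \geq z(t)$ follows from the standard scalar ODE comparison principle (one considers $\d = Z - z$, notes that at any first crossing time $Z=z$ so $\dot Z - \dot z \geq 0$, which rules out $Z$ dropping strictly below $z$).

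There is no serious obstacle here; the only subtle point is the sign of the coupling term $\int Y'_{\th'}\,\dmu_0^{\th'}\dpi(\th')-Y_\th$, which without further information could be negative. The trick is simply to retain the $-Y_\th$ part and discard the nonnegative integral, which loses nothing for a lower bound but decouples the equation for $Y_\th$ from the remaining characteristics. The restriction $\th>1$ is needed to make the explicit logistic formula well-defined with a positive denominator; for $\th\leq 1$ one would instead obtain a decay-type bound, but such a case is not claimed by the lemma.
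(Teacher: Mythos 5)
Your proposal is correct and follows essentially the same route as the paper: drop the nonnegative coupling integral in \eqref{e:Yth}, obtain the logistic differential inequality $\ddt Y_\th^p \geq p(\th-1-Y_\th^p)Y_\th^p$, and conclude by the comparison principle with the explicit logistic solution. The only difference is that you spell out the explicit solution and the comparison argument, which the paper leaves implicit.
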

\begin{proof}
To achieve \eqref{e:Ythmin} we decouple the system \eqref{e:Yth} by ignoring the entire coupling term
    \[
    \int_{\R_+}\int_{\R_+}Y'_{{\th'}} \dmu^{\th '}_0 (y') \dpi(\th') \geq 0.
    \]
  So,    \begin{equation}\label{Yth-prime}
\ddt Y^p_\th \geq  p\left( \th -1 -Y_\th^p\right) Y^p_\th.
    \end{equation}  
     The lemma follows from the comparison principle.
\end{proof}
Let us note that in principle the statement of the lemma holds for any $ \th - 1$, but it is most meaningful when the parameter is positive in view of the universal support from below for all characteristics \eqref{e:Ymin}.

\subsection{Mono-opinion state}
In the next step we will show that for each $\th \in \supp \pi$, the slicing measure $\mu^\th $ will converge to a Dirac measure in Wasserstein distance with different rates depending on $\th$.
\begin{lemma}\label{l:mono}
Let $\mu$ be the measure-valued solution to \eqref{e:opinion} satisfying \eqref{e:R0} and $\pi$ being the conviction marginal \eqref{e:pi}. Then there exists a function $g\in \Lip[\thmin,\thmax]$ such that\begin{equation}\label{exp-th}
\sup_{\th \in \supp \pi}\cW_1 (\mu_t^\th, \d_{g(\th)}) \leq  C e^{-c t}, \quad t>0,
\end{equation}
where $C,c>0$ depend only on $\mu_0$ and the parameters of the model.
\end{lemma}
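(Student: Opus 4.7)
My approach is a direct characteristic-level analysis organized in three steps: uniform exponential contraction of characteristics sharing the same conviction, convergence of a representative characteristic to define $g(\th)$, and a uniform $\th$-derivative bound yielding Lipschitz regularity. For the first step, fix $\th\in[\thmin,\thmax]$ and $y_1,y_2\in[\ymin,\ymax]$. Since the nonlocal coupling in \eqref{e:Yth} depends on the starting point only through the term $-Y_\th$, subtracting the equation at $y_1$ and $y_2$ gives
\begin{equation*}
\ddt\bigl(Y_\th(t,y_1)-Y_\th(t,y_2)\bigr) = \bigl[(\th-1)-(p+1)\tilde Y^p\bigr]\bigl(Y_\th(t,y_1)-Y_\th(t,y_2)\bigr),
\end{equation*}
with $\tilde Y$ intermediate. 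The universal bound \eqref{e:Ymin} provides $\tilde Y^p\geq\thmin-o(1)$, while for $\th>1$ the individual bound \lem{l:Ythmin} improves this to $\tilde Y^p\geq(\th-1)-o(1)$. Taking the better of the two in the regimes $\th\leq 1+\thmin$ and $\th>1+\thmin$, the bracketed coefficient is eventually at most $-c_0<0$ uniformly in $\th$ (with $c_0$ of order $p\thmin$), so $\diam\supp\mu_t^\th\leq Ce^{-c_0 t}$. Setting $g_\th(t):=Y_\th(t,\ymin)$ then yields $\cW_1(\mu_t^\th,\d_{g_\th(t)})\leq Ce^{-c_0 t}$ uniformly in $\th\in\supp\pi$.

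The second step is to show that $g_\th(t)$ itself converges as $t\to\infty$ at an exponential rate uniform in $\th$. The dissipation identity $\ddt\cE=-\int|u|^2\dmu$ gives $\int_0^\infty\int|u|^2\dmu_t\dt<\infty$, and combined with uniform $C^1$-bounds on $u$ a Barbalat-type argument forces $\dot Y_\th\to 0$. Substituting the asymptotic $\mu_t^\th\approx\d_{g_\th(t)}$ reduces the evolution of the means to an asymptotically autonomous system
\begin{equation*}
\dot g_\th = \bar g(t) + (\th-1)g_\th - g_\th^{p+1} + O(e^{-c_0 t}),\qquad \bar g(t):=\int g_{\th'}\dpi(\th'),
\end{equation*}
that mirrors \eqref{e:opiniondisc}. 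This is the main obstacle: because $\cE$ is not globally convex, standard gradient-flow contraction is unavailable, and upgrading subsequential convergence (automatic from compactness) to a uniform exponential rate requires either a Lojasiewicz--Simon inequality for $\cE$ (as in \cite{LRS-friction}) or a sharper direct argument that exploits the exponential contraction of step one to pin down the single limit $g(\th)$.

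For the Lipschitz regularity, I would differentiate \eqref{e:Yth} in the parameter $\th$ to get
\begin{equation*}
\ddt(\p_\th Y_\th) = Y_\th + \bigl[(\th-1)-(p+1)Y_\th^p\bigr]\p_\th Y_\th,
\end{equation*}
whose coefficient is eventually $\leq -c_0$ by the same analysis as in step one, and whose source is bounded by $\thmax^{1/p}$. Hence $\p_\th Y_\th$ is uniformly bounded in $t$ and $\th$, which gives $|Y_\th(t,y)-Y_{\th'}(t,y)|\leq C|\th-\th'|$ for all $t$, and passing to the limit yields $|g(\th)-g(\th')|\leq C|\th-\th'|$, so $g\in\Lip[\thmin,\thmax]$.
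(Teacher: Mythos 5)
Your steps one and three are, modulo presentation, exactly the paper's argument: the paper differentiates \eqref{e:Yth} in $y$ (rather than subtracting two characteristics), splits into the stable case $\th-1\le\e_0$ handled by the global bound \eqref{e:Ymin} and the unstable case $\th-1\ge\e_0$ handled by \lem{l:Ythmin}, and obtains $\|\p_y Y_\th\|_\infty\le c_3e^{-c_4t}$, i.e.\ your $\diam\supp\mu^\th_t\le Ce^{-c_0t}$; the Lipschitz bound is likewise obtained by differentiating in $\th$, noting the damping coefficient has the same structure and the source $Y_\th$ is bounded, and passing to the limit. So far the two arguments coincide.

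The problem is your step two, which you explicitly leave open: as written, your proposal proves concentration, $\cW_1(\mu^\th_t,\d_{g_\th(t)})\le Ce^{-c_0t}$, but not the existence of a time-asymptotic limit $g(\th)$ with exponential rate, which is what \eqref{exp-th} asserts. That is a genuine gap in your write-up — the lemma is not proved without it — and it is also the point where you diverge from the paper. The paper does not go through the energy dissipation, Barbalat, or a Lojasiewicz--Simon inequality at all: after establishing \eqref{Yth-converge} it simply concludes, from the squeezing of same-$\th$ characteristics together with precompactness of the trajectories, that a limit $g(\th)$ exists and $\sup_y|Y_\th(y,t)-g(\th)|\le c_5e^{-c_4t}$, and then passes to the limit in the $\th$-derivative bound. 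Your instinct that something must be said here is not unreasonable — the squeezing estimate controls the spread of $\mu^\th_t$ at each fixed time, not the motion of the concentration point, and the paper dispatches this in one sentence rather than with the machinery you anticipate — but relative to the statement you were asked to prove, you have not supplied the missing step, whereas the paper claims it directly. Note also that under the extra hypothesis \eqref{e:!ass} you could close your gap elementarily and without any gradient-flow input: apply the contraction of Lemma \ref{l:unique} (whose proof uses only \eqref{e:expY} and the lower bounds, not the existence of $g$) to the pair $\mu_t$ and $\tilde\mu_t:=\mu_{t+h}$, which share the marginal $\pi$; this gives $\sup_\th\cW_1(\mu^\th_t,\mu^\th_{t+h})\le Ce^{-ct}$, hence $\{\mu^\th_t\}$ is Cauchy in $\cW_1$ and converges exponentially, and combined with your step one the limit must be a Dirac mass $\d_{g(\th)}$. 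Unconditionally, however, the lemma's existence claim rests on the paper's direct assertion, not on the Lojasiewicz route you sketch.
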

\begin{proof}
Differentiating the characteristic equation \eqref{e:Yth} we obtain
\begin{equation}\label{e:Yy}
\p_t \p_y Y_{\th} = ( \th - 1) \p_y Y_{\th}  -  (p+1)Y_{\th}^p \p_y Y_{\th}.
\end{equation}
In what follows we denote $L^\infty = L^\infty(R_0)$. Evaluating at a point of maximum $y$ such that $(y,\th) \in R_0$, 
\begin{equation} \label{e:Yth-aux}
\ddt \| \p_y Y_{\th}\|_{\infty} =  ( \th - 1) \| \p_y Y_{\th}\|_{\infty} -  (p+1)Y_\th^p \| \p_y Y_{\th}\|_{\infty}.
\end{equation}

Let us first consider the stable case when $\th - 1 \leq \e_0$, with $\e_0>0$ to be determined later.  Using \eqref{e:Ymin} we find that $Y_\th^p \geq c_0$, which is determined only by the initial condition and the parameters of the model. Plugging in \eqref{e:Yth-aux}, we obtain
\begin{equation} \label{e:Yth-aux2}
\ddt \| \p_y Y_{\th}\|_{\infty} \leq   \e_0 \| \p_y Y_{\th}\|_{\infty} -  (p+1)c_0 \| \p_y Y_{\th}\|_{\infty} \leq - \e_0 \| \p_y Y_{\th}\|_{\infty} 
\end{equation}
by setting $\e_0 = \frac{ (p+1) c_0}{2}$. 

For the unstable case $ \th - 1 \geq \e_0$, we use \eqref{e:Ythmin}, which implies that 
\begin{equation}\label{e:Ythlow}
Y_\th^p \geq \th - 1  - c_1 e^{-c_2t},
\end{equation}
where $c_1,c_2>0$ also depends only on the initial condition and parameters of the model. Hence,
\[
\ddt \| \p_y Y_{\th}\|_{\infty} \leq  ( \th - 1) \| \p_y Y_{\th}\|_{\infty} -  (p+1)( \th - 1- c_1 e^{-c_2t})  \| \p_y Y_{\th}\|_{\infty} \leq [ - p \e_0 + c_1 e^{-c_2t} ] \| \p_y Y_{\th}\|_{\infty}.
\]

In either case we obtain, by Gr\"onwall's lemma, 
\begin{equation}\label{e:expY}
     \| \p_y Y_{\th}\|_{L^\infty} \leq c_3 e^{-c_4 t}.
\end{equation}
Consequently,
\begin{equation}\label{Yth-converge}
|Y_\th(y,t) - Y_\th(y',t)| \leq c_5 e^{-c_4 t}, \quad\quad \text{ for any } (y,\th), (y',\th) \in R_0.
\end{equation}
We can see that the characteristics are squeezing as $t$ approaches infinity. Since the trajectories are also precompact, for each $\th \in [\thmin,\thmax]$ there exists $g(\th)$ such that 
\begin{equation*}
\sup_{\ y\in  [\ymin,\ymax]} |Y_\th(y,t) - g(\th)| \leq c_5 e^{-c_4 t} .
\end{equation*}
We compute
\begin{align}
\cW_1 (\mu_t^\th, \d_{g(\th)}) &= \sup_{\| \f \|_\Lip\leq 1}\Big|\int_{\R_+}\f(y)\dmu^\th_t(y) - \int_{\R_+}\f(y)\d_{g(\th)}(y) \Big|\notag\\
& = \sup_{\| \f \|_\Lip\leq 1}\Big|\int_{\R_+}\f(Y_\th)\dmu^\th_0(y)-\f(g(\th))\Big|\notag\\
& = \sup_{\| \f \|_\Lip\leq 1}\Big|\int_{\R_+} (\f(Y_\th) -\f(g(\th))) \dmu^\th_0(y)\Big|\notag\\
&\leq \|Y_\th - g(\th)\|_{\infty}\notag.
\end{align}
The statement \eqref{exp-th} follows.

It remains to show that $g$ is a Lipschitz function on $[\thmin,\thmax]$. Indeed, computing the evolution of $\p_\th Y_\th$ we obtain
\[
\p_t \p_\th Y_\th = Y_\th + (\th  -1 - (p+1) Y_\th^p ) \p_\th Y_\th.
\]
Note that $Y_\th $ remains bounded on $R_0$, and the remainder of the equation has the same structure as in \eqref{e:Yy}. So, 
\[
\ddt \| \p_\th Y_\th \|_\infty \leq  c_1 + (- c_2 + c_3 e^{-c_4 t} )  \| \p_\th Y_\th \|_\infty.
\]
We obtain
\begin{equation}\label{ }
 \| \p_\th Y_\th \|_\infty < C.
\end{equation}
Consequently,
\[
| Y(y,\th,t) - Y(y,\th',t)| \leq C |\th - \th'|.
\]
Letting $t \to \infty$ we obtain
\[
|g(\th) - g(\th')| \leq C |\th - \th'|.
\]
This finishes the proof.
\end{proof}

\subsection{Uniqueness and stability}

The uniqueness of the limiting state follows from the lemma below and holds under either of the two conditions on parameters
\begin{equation}\label{e:!ass}
\thmin > \frac{p+1}{p} \quad \text{or} \quad \frac{\thmax}{\thmin} < p+1.
\end{equation}
Note that under the change \eqref{e:change} this translates into condition \eqref{e:!}.

\begin{lemma}\label{l:unique}
Let $\mu$ and $\tilde{\mu}$ be two solutions to \eqref{e:opinion} starting in a box $R_0$ and sharing the same conviction measure $\pi$. And suppose either of the assumptions \eqref{e:!ass} hold. Then for any $t\in [0,T)$ one has
\begin{equation}\label{ineq:stability}
\sup_{\th \in 
\supp \pi}\cW_1(\mu^\th_t,\tilde{\mu}^\th_t) \leq c_1 e^{-c_2 t}\sup_{\th \in 
\supp \pi}\cW_1(\mu^\th_0,\tilde{\mu}^\th_0) ,
\end{equation}
where $c_1,c_2>0$ depend on the initial data and parameters of the model.
\end{lemma}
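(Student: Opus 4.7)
The plan is to compare the characteristic flows of $\mu$ and $\tilde\mu$ launched from the same base point and then transfer the resulting bound on the flows to $\cW_1$ via an optimal coupling of the initial slices. Denote the characteristics by $Y_\th(t,y)$ and $\tilde Y_\th(t,y)$ as in \eqref{e:Yth}, and set
\[
D_\th(t,y) := Y_\th(t,y) - \tilde Y_\th(t,y), \qquad (y,\th)\in R_0,
\]
which vanishes identically at $t=0$. Subtracting the two ODEs and using the factorization $Y_\th^{p+1}-\tilde Y_\th^{p+1}=(p+1)\xi^p D_\th$ with $\xi=\xi(t,y,\th)$ between $Y_\th$ and $\tilde Y_\th$ gives
\[
\ddt D_\th = \bigl(P-\tilde P\bigr) + \bigl[(\th-1)-(p+1)\xi^p\bigr] D_\th,
\]
where $P(t)=\int Y_{\th'}(t,z)\dmu_0^{\th'}(z)\dpi(\th')$ and similarly for $\tilde P$. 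For the momentum discrepancy I split
\[
P-\tilde P = \int (Y_{\th'}-\tilde Y_{\th'})\dmu_0^{\th'}\dpi(\th') + \int \tilde Y_{\th'}\,d(\mu_0^{\th'}-\tilde\mu_0^{\th'})\dpi(\th'),
\]
and set $M(t):=\|D(t)\|_{L^\infty(R_0)}$ and $A_0:=\sup_{\th}\cW_1(\mu_0^\th,\tilde\mu_0^\th)$. The first integral is bounded by $M(t)$; the second by Kantorovich duality together with the gradient bound \eqref{e:expY}, yielding
\[
|P-\tilde P| \leq M(t) + C e^{-ct}A_0.
\]

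The critical step is to show that under either branch of \eqref{e:!ass} the absorption coefficient satisfies, uniformly on $R_0$ and for all $t\geq T_0$,
\[
(\th-1)-(p+1)\xi^p \leq -\beta, \qquad \beta > 1.
\]
In Case 1 ($\thmin>(p+1)/p$), \lem{l:Ythmin} gives $\xi^p\geq\th-1-c_1e^{-c_2t}$, so the coefficient is eventually bounded above by $-p(\thmin-1)<-1$. In Case 2 ($\thmax<(p+1)\thmin$), the universal bound \eqref{e:Ymin} yields $\xi^p\geq\thmin-c_1e^{-c_2t}$, and the coefficient is eventually bounded above by $(\thmax-1)-(p+1)\thmin<-1$. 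The strict inequality $\beta>1$, essential to absorb the unit coefficient in front of $M$ arising from the momentum split, is precisely what \eqref{e:!ass} buys us.

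Combining the two estimates above,
\[
\ddt M \leq -(\beta-1)M + Ce^{-ct}A_0, \qquad t\geq T_0,
\]
while on $[0,T_0]$ a crude Gr\"onwall with $M(0)=0$ gives $M(T_0)\leq C(T_0)A_0$. Hence $M(t)\leq CA_0 e^{-c' t}$ for all $t\geq 0$ with some $c'>0$. Finally, for each $\th\in\supp\pi$ take an optimal $\cW_1$-coupling $\gamma_0^\th$ between $\mu_0^\th$ and $\tilde\mu_0^\th$ and push it forward by $(Y_\th(t,\cdot),\tilde Y_\th(t,\cdot))$; a triangle inequality together with $\|\p_y\tilde Y_\th\|_\infty\leq c_3 e^{-c_4 t}$ from \eqref{e:expY} yields
\[
\cW_1(\mu_t^\th,\tilde\mu_t^\th) \leq M(t) + \|\p_y\tilde Y_\th\|_\infty\, \cW_1(\mu_0^\th,\tilde\mu_0^\th) \leq C A_0 e^{-c't},
\]
which, after taking sup in $\th$, is precisely \eqref{ineq:stability}. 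The principal obstacle is the verification of the absorption step: one must show that both alternatives in \eqref{e:!ass} produce a margin $\beta > 1$ rather than merely $\beta > 0$, since losing a full unit from the momentum coupling is unavoidable. The asymmetry of the two branches reflects the two distinct lower bounds \eqref{e:Ymin} and \eqref{e:Ythmin} at our disposal, and each branch is optimized against one of them.
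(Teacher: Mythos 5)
Your proposal is correct and follows essentially the same route as the paper: compare the characteristics $Y_\th$ and $\tilde Y_\th$ launched from the same point, split the momentum discrepancy into a $\cD$-type term plus a term controlled by $\|\p_y \tilde Y\|_\infty\,\cW_1(\mu_0^\th,\tmu_0^\th)$ via \eqref{e:expY}, absorb using the lower bounds \eqref{e:Ythlow} or \eqref{e:Ymin} under the respective branches of \eqref{e:!ass}, and transfer back to $\cW_1$ by a coupling argument. Your explicit bookkeeping that the net coefficient must beat the extra unit coming from the momentum coupling (i.e.\ $\beta>1$) is exactly the paper's computation of the coefficient $\th-(p+1)\min\{Y_\th^p,\tilde Y_\th^p\}$, so there is nothing substantively different to flag.
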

\begin{proof}
In what follows $L^\infty: = L^\infty([\ymin,\ymax])$.  Denoting $\tilde{Y}_{\th}, \tilde{Y}'_{\th}$ the characteristics of $\tmu^\th$ starting from $y, y'$ respectively.
 For fixed $\th\in \supp \pi$,
\[
\begin{split}
    \cW_1(\mu^\th_t,\tmu^\th_t) &= \sup_{\| \f \|_\Lip\leq 1} \Big|\int_{\R_+}\f(y)\dmu^\th_t(y) - \int_{\R_+}\f(y)\dtmu^\th_t(y)\Big |\\
    & = \sup_{\| \f \|_\Lip\leq 1}\left| \int_{\R_+} \f(Y_{\th})\dmu^\th_0(y) - \int_{\R_+} \f(\tilde{Y}_{\th}) \dtmu^\th_0 (y) \right| \\
&= \sup_{\| \f \|_\Lip\leq 1}\left| \int_{\R_+} \f(Y_{\th})\dmu^\th_0(y) - \int_{\R_+} \f(Y_{\th})\dtmu^\th_0(y) + \int_{\R_+}[\f(Y_{\th})- \f(\tilde{Y}_{\th}) ]\dtmu^\th_0 (y) \right| \\
   &\leq \quad \|\p_y Y_{\th}\|_{L^\infty} \cW_1(\mu^\th_0,\tmu^\th_0) + \| Y_{\th} - \tilde{Y}_{\th} \|_{L^\infty}.
\end{split}
\]
We proved the uniform exponential contraction for $\|\p_y Y_{\th}\|_{L^\infty} $ in \eqref{e:expY}.  

Let us now focus on $\| Y_{\th} - \tilde{Y}_{\th} \|_{L^\infty}$.   We have
\begin{equation*}\label{}
\begin{split}
\ddt (Y_{\th} - \tilde{Y}_{\th} ) &= \int_{\R_+}\left[ \int_{\R_+}Y'_{{\th'}} \dmu^{\th'}_0(y') - \int_{\R_+}\tilde{Y}'_{{\th'}} \dtmu^{\th'}_0(y') \right] \dpi(\th') \\
&+ (\th-1) (Y_{\th} - \tilde{Y}_{\th}) - (Y_{\th}^{p+1} - \tilde{Y}_{\th}^{p+1})\\
& = \int_{\R_+} \Big[ \int_{\R_+} Y'_{{\th'}} (\dmu^{\th'}_0(y') - \dtmu_0^{\th'}(y')) + \int_{\R_+} (Y'_{{\th'}} - \tilde{Y}'_{{\th'}}) \dtmu^{\th'}_0 (y')\Big]\dpi(\th')\\
&+(\th-1) (Y_{\th} - \tilde{Y}_{\th}) -  (p+1)\hat{Y}_\th^p(Y_{\th} - \tilde{Y}_{\th}),
\end{split}
\end{equation*}
where $\hat{Y}_\th$ is between $Y_{\th} $ and $ \tilde{Y}_{\th}$. 
Denote
\[
\cD(t) = \sup_{\th \in [\thmin,\thmax]}  \| Y_{\th} - \tilde{Y}_{\th}\|_{L^\infty}.
\]
At a point of maximum we obtain using \eqref{e:expY},
\[
   \ddt \cD  \leq  c_3 e^{-c_4 t} \sup_{\th \in \supp \pi} \cW_1(\mu^{\th}_0, \tmu^{\th}_0)  +  \th \cD -  (p+1)\min\{Y_\th^p, \tilde{Y}_\th^p \}\cD.
\]
Using \eqref{e:Ythlow},
\[
\begin{split}
    \ddt \cD & \leq  c_3 e^{-c_4 t} \sup_{\th \in \supp \pi} \cW_1(\mu^{\th}_0, \tmu^{\th}_0)  +  \th \cD - (p+1)[  \th - 1  - c_1 e^{-c_2t}] \cD \\
    & = c_3 e^{-c_4 t} \sup_{\th \in \supp \pi} \cW_1(\mu^{\th}_0, \tmu^{\th}_0)  + [ p+1 - p  \th + c_1 e^{-c_2t}] \cD
\end{split}
\]
The result follows provided $ \thmin > \frac{p+1}{p}$. Alternatively, using the lower bound \eqref{e:Ymin},
\[
 \ddt \cD  \leq  c_3 e^{-c_4 t} \sup_{\th \in \supp \pi} \cW_1(\mu^{\th}_0, \tmu^{\th}_0)  + [ \thmax  -  (p+1) \thmin + c_1 e^{-c_2t}] \cD
\]
and the result follows provided $\frac{\thmax}{\thmin} < p+1$.
\end{proof}

Under the stability assumption \eqref{e:!ass} the limiting states are also stable with respect to perturbation of convictions. So, a small change even in the weak topology of conviction marginal $\pi$ results in a small change in the limiting mono-opinion state.
This can be proved via a minor modification of the argument above.

First, since we will be comparing slicing measures that are technically defined not  on the same set let us adopt a convention that if $\th\not\in \supp \pi$, then $\mu^\th = 0$.

\begin{lemma}\label{l:stability}
Let $\mu$ and $\tilde{\mu}$ be two measure-valued solutions to \eqref{e:opinion}  with the conviction marginals $\pi$ and $\tpi$, respectively, and parameters satisfying \eqref{e:!ass}. Then for any $t\in [0,T)$ one has
\begin{equation}\label{ineq:stability}
\sup_{\th \in [\thmin,\thmax] }\cW_1(\mu^\th_t,\tilde{\mu}^\th_t) \leq c_1 e^{-c_2 t}\sup_{\th \in 
 [\thmin,\thmax] }\cW_1(\mu^\th_0,\tilde{\mu}^\th_0) +c_3 e^{-c_4 t} + c_5 \cW_1(\pi,\tpi),
\end{equation}
where $c_i>0$  depend only on the initial condition and parameters of the model. 
\end{lemma}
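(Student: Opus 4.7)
The plan is to follow the scheme of \lem{l:unique} essentially verbatim, with one new ingredient needed to accommodate $\pi \ne \tilde\pi$. Setting $\cD_0 := \sup_\th \cW_1(\mu^\th_0, \tilde\mu^\th_0)$, the same Wasserstein estimate
\[
\cW_1(\mu^\th_t, \tilde\mu^\th_t) \leq \|\p_y Y_\th\|_{L^\infty} \cW_1(\mu^\th_0, \tilde\mu^\th_0) + \|Y_\th - \tilde Y_\th\|_{L^\infty}
\]
applies, and by \eqref{e:expY} the first factor is $\lesssim e^{-ct}$, so the $c_1 e^{-c_2 t}\cD_0$ summand on the right of \eqref{ineq:stability} is accounted for. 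It remains to control $\cD(t) := \sup_{\th\in[\thmin,\thmax]} \|Y_\th - \tilde Y_\th\|_{L^\infty}$ by $c_3 e^{-c_4 t} + c_5 \cW_1(\pi, \tilde\pi)$; note $\cD(0) = 0$ since the characteristics start at their labels.

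Subtracting the characteristic equations for $Y_\th$ and $\tilde Y_\th$ and evaluating at a maximum of $|Y_\th - \tilde Y_\th|$ on $R_0$, one obtains
\[
\ddt \cD \leq |P(t) - \tilde P(t)| + [(\th - 1) - (p+1) \hat Y^p] \cD,
\]
with $P(t) := \int Y\, d\mu_0$, $\tilde P(t) := \int \tilde Y\, d\tilde\mu_0$, and $\hat Y$ between $Y_\th$ and $\tilde Y_\th$. The only novelty relative to \lem{l:unique} is bounding $|P - \tilde P|$, for which I split
\[
P - \tilde P = \int Y\, d(\mu_0 - \tilde\mu_0) + \int (Y - \tilde Y)\, d\tilde\mu_0.
\]
The second piece is at most $\cD(t)$. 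For the first, I invoke \lem{l:mono} to write $Y(t, y', \th') = g(\th') + R(t, y', \th')$ with $\|R\|_{L^\infty(R_0)} \leq c e^{-c' t}$; since $g$ depends on $\th'$ alone, $\int g\, d(\mu_0 - \tilde\mu_0) = \int g\, d(\pi - \tilde\pi)$, which is at most $\Lip(g)\,\cW_1(\pi, \tilde\pi)$, while $|\int R\, d(\mu_0 - \tilde\mu_0)| \leq 2\|R\|_{L^\infty}$. Collecting,
\[
|P - \tilde P| \leq \cD + c e^{-c' t} + C\, \cW_1(\pi, \tilde\pi).
\]

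Substituting back into the differential inequality and combining $(\th-1)\cD + \cD = \th\cD$ yields
\[
\ddt \cD \leq [\th - (p+1)\hat Y^p]\cD + c e^{-c' t} + C\, \cW_1(\pi, \tilde\pi),
\]
and each alternative in \eqref{e:!ass} makes the bracket eventually bounded by $-\delta + c e^{-ct}$ for some $\delta > 0$: under $\thmin > (p+1)/p$ one uses \eqref{e:Ythmin} to arrive at $(p+1) - p\thmin + c e^{-ct}$, while under $\thmax/\thmin < p + 1$ one uses the universal bound \eqref{e:Ymin} to arrive at $\thmax - (p+1)\thmin + c e^{-ct}$. Gr\"onwall's lemma with $\cD(0) = 0$ then yields $\cD(t) \leq C e^{-c t} + C\, \cW_1(\pi, \tilde\pi)$, and combining with the opening Wasserstein decomposition produces \eqref{ineq:stability}.

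The main obstacle is the decomposition $Y = g + R$. A direct attempt to estimate $|\int Y\, d(\mu_0 - \tilde\mu_0)|$ via a Wasserstein duality on $\O$ would require bounding $\cW_1(\mu_0, \tilde\mu_0)$ by $\cW_1(\pi, \tilde\pi)$ and $\cD_0$, which needs Lipschitz dependence of $\mu^{\th'}_0$ on $\th'$, not available in our setting. The rescue is that, thanks to the mono-opinion convergence of \lem{l:mono}, the entire $\th'$-dependence of $Y(t, \cdot, \th')$ sits at uniform distance $Ce^{-ct}$ from the Lipschitz profile $g$, which \emph{can} be legitimately paired against $\pi - \tilde\pi$.
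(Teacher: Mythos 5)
Your proof is correct and follows essentially the same route as the paper: reduce everything to the sup-distance $\cD(t)$ of the coupled characteristics, control the difference of the coupling terms by pairing the Lipschitz limiting profile furnished by Lemma~\ref{l:mono} against $\pi-\tpi$ up to an exponentially small remainder, and close with the damping coming from \eqref{e:!ass}. The only (harmless) difference is bookkeeping: you split $P-\tilde P$ into two pieces and pair $g$ (the profile of $\mu$, applied to $Y=g+R$) against the full difference $\mu_0-\tilde\mu_0$, exploiting that $g$ depends on $\th$ alone and that $\cD(0)=0$, whereas the paper splits into three pieces, bounds the slicing-measure difference by $\|\p_y \tilde Y\|_{\infty}\sup_\th \cW_1(\mu_0^\th,\tmu_0^\th)$, and pairs $G(\th')=\int \tilde Y_{\th'}\,d\tmu_0^{\th'}$ (decomposed via $\tilde g$) against $\pi-\tpi$; your variant even drops the $\sup_\th \cW_1(\mu_0^\th,\tmu_0^\th)$ contribution from the $\cD$-inequality, which is a slight streamlining consistent with the stated estimate.
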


By sending $t \to \infty$ and using that fact that 
\[
\sup_{\th \in [\thmin,\thmax] }   |g(\th)-\tilde{g}(\th) |   = \sup_{\th \in [\thmin,\thmax] } \cW_1( \d_{g(\th)}, \d_{\tilde{g}(\th)}),
\]
we obtain the statement \eqref{e:gLip} of \thm{t:main}.

\begin{proof}
We only need to focus on estimation of $\cD(t)$. We have
\begin{equation*}\label{}
\begin{split}
\ddt (Y_{\th} - \tilde{Y}_{\th} ) &= \int_{\R_+}\int_{\R_+}Y'_{{\th'}} \dmu^{\th'}_0(y')\dpi(\th') - \int_{\R_+}\int_{\R_+}\tilde{Y}'_{{\th'}} \dtmu^{\th'}_0(y')\dtpi(\th')  \\
& + (\th-1) (Y_{\th} - \tilde{Y}_{\th}) - (Y_{\th}^{p+1} - \tilde{Y}_{\th}^{p+1})\\
& = \int_{\R_+} \int_{\R_+} Y'_{{\th'}} \dmu^{\th'}_0(y') \dpi(\th') - \int_{\R_+} \int_{\R_+} \tilde{Y}'_{{\th'}} \dmu^{\th'}_0(y') \dpi(\th')\\
&+ \int_{\R_+} \int_{\R_+} \tilde{Y}'_{{\th'}} \dmu^{\th'}_0(y') \dpi(\th') - \int_{\R_+} \int_{\R_+} \tilde{Y}'_{{\th'}} \dtmu^{\th'}_0(y') \dpi(\th') \\
&+ \int_{\R_+} \int_{\R_+} \tilde{Y}'_{{\th'}} \dtmu^{\th'}_0(y') \dpi(\th') - \int_{\R_+}\int_{\R_+}\tilde{Y}'_{{\th'}} \dtmu^{\th'}_0(y')\dtpi(\th') \\
&+  (\th-1) (Y_{\th} - \tilde{Y}_{\th}) - (p+1)\hat{Y}_\th^p(Y_{\th} - \tilde{Y}_{\th}).
\end{split}
\end{equation*}
Hence,
\[
\ddt \cD \leq    c_3 e^{-c_4 t} \sup_{\th \in  [\thmin,\thmax]} \cW_1(\mu^{\th}_0, \tmu^{\th}_0) + \int_{\R^+} G(\th') [ \dpi - \dtpi] + \th \cD -  (p+1)\min\{Y_\th^p, \tilde{Y}_\th^p \}\cD,
\]
where 
\[
G(\th) := \int_{\R_+}\tilde{Y}_{{\th}}(y) \dtmu^{\th}_0(y) = \int_{\R_+}(\tilde{Y}_{{\th}}(y) - \tilde{g}(\th)) \dtmu^{\th}_0(y) + \tilde{g}(\th) .
\]
Since the first term is bounded exponentially, and $\tilde{g}\in \Lip$, we have
\[
\int_{\R^+} G(\th') [ \dpi - \dtpi] \leq c_1 e^{-c_2 t}  +  \|\tilde{g}\|_{\Lip} \cW_1(\pi,\tpi).
\]
Coming back to the $\cD$-equation and estimating the rest of the right hand side as previously we obtain
\[
\ddt \cD \leq    c_3 e^{-c_4 t} \sup_{\th \in  [\thmin,\thmax]} \cW_1(\mu^{\th}_0, \tmu^{\th}_0) +c_1 e^{-c_2 t} + c_5  \|\tilde{g}\|_{\Lip} \cW_1(\pi,\tpi)  - c_6 \cD.
\]
The result follows.
\end{proof}

\section{Properties of mono-opinion states} The results of the previous sections establish that for each conviction measure there is at least one (and in some cases only one) limiting distributions of opinions $g \in \Lip[\thmin,\thmax]$. Technically it makes material sense to only consider values of $g$ on the $\supp \pi$, but to study analytic properties of $g$ it will be convenience to make full use of its existence on the closed interval $[\thmin,\thmax]$.

We have the following equation for $g$:
\begin{equation}\label{e:steady-state}
\int_{\R_+} g(\eta) \dpi(\eta) +(\th-1) g(\th) - g^{p+1}(\th) = 0, \qquad \forall \th \in [\thmin,\thmax].
\end{equation}

Although it is difficult to find the function $g$ explicitly, solutions to  \eqref{e:steady-state} exhibit certain universal features. One instance where $g$ is computable is when $p=1$.  Indeed, let
\[
\a := \int_{\R_+} g(\eta) \dpi(\eta),
\]
then by \eqref{e:steady-state} we have
\[
g^2 +(1-\th) g - \a = 0.
\]
This second order equation always has a positive solution
\[
g =[ \th-1 + \sqrt{(1-\th)^2 + 4\a}]/2, 
\]
for any parameter $\a >0$. Note that this expression is still implicit as $\a$ depends on $g$. But whatever $\a$ is we can see in particular that $g$ is strictly increasing and convex.

Let us discuss these properties more systematically.

First, let us consider the extreme values
\[
\gmax = \max_{[\thmin,\thmax]} g(\th), \qquad \gmin = \min_{[\thmin,\thmax]} g(\th).
\]
Then clearly,
\begin{equation}\label{e:gminmax}
\thmin \leq \gmin^p, \quad \gmax^p \leq \thmax. 
\end{equation}

By \eqref{e:steady-state}, we also have that
\[
(\th-1) g(\th) - g^{p+1}(\th)  \leq 0, \qquad \forall \th \in [\thmin,\thmax].
\]
Thus, for each $\th \in [\thmin,\thmax]$ the following estimate holds true
\begin{equation}\label{g-rough-bound}
    g^p(\th) \geq \th - 1.
\end{equation}
A more refined estimate will be obtained next.
\begin{lemma}
  Let $g$ be a solution to the equation \eqref{e:steady-state}.  Then $g\in C^\infty([\thmin,\thmax])$, $g$ is strictly increasing on $[\thmin,\thmax]$, and for each $\th \in [\thmin,\thmax]$,
\begin{equation}\label{g-low-bound}
    g^p(\th) \geq \th + \pi( [\th,\infty)) - 1.
\end{equation} 
\end{lemma}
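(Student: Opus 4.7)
The plan is to treat \eqref{e:steady-state} as an implicit equation for $g$, with the scalar $\a := \int_{\R_+} g(\eta)\dpi(\eta)$ acting as a $\th$-independent constant, so that $g(\th)$ is a root of
\[
F(g,\th) := g^{p+1} - (\th-1)\, g - \a = 0.
\]
I would first secure strict positivity of $g$. Since $g$ was produced in \lem{l:mono} as the $t\to\infty$ limit of characteristics $Y_\th(t,y)$ starting in $R_0$, the uniform lower bound \eqref{e:Ymin} passes to the limit and gives $g^p(\th) \geq \thmin > 0$ on $[\thmin,\thmax]$. In particular $\a > 0$, and the identity $g(\th)\bigl(g^p(\th) - (\th-1)\bigr) = \a$ then forces the strict inequality $g^p(\th) > \th - 1$. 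Consequently
\[
\p_g F(g(\th),\th) = (p+1) g^p - (\th-1) = p\, g^p + \bigl(g^p - (\th-1)\bigr) > 0,
\]
and since $F$ is $C^\infty$ in both variables, the implicit function theorem yields $g \in C^\infty([\thmin,\thmax])$.

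Strict monotonicity comes from differentiating \eqref{e:steady-state} in $\th$, which gives
\[
g'(\th)\bigl[(p+1) g^p(\th) - (\th-1)\bigr] = g(\th).
\]
The bracket is exactly the quantity $\p_g F$ just shown to be strictly positive, and $g > 0$, so $g'(\th) > 0$ throughout $[\thmin,\thmax]$.

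The refined lower bound \eqref{g-low-bound} is then a short consequence of this monotonicity. Solving \eqref{e:steady-state} for the quantity of interest,
\[
g^p(\th) - (\th-1) = \frac{1}{g(\th)} \int_{\R_+} g(\eta) \dpi(\eta),
\]
I restrict the integral on the right to $\{\eta \geq \th\}$ and apply $g(\eta) \geq g(\th)$ there (by strict monotonicity) to get
\[
\int_{\R_+} g(\eta) \dpi(\eta) \geq g(\th)\, \pi([\th,\infty)),
\]
which divides out to precisely \eqref{g-low-bound}.

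The only mildly delicate point is securing $\a > 0$ at the very first step, because without it the factor $\p_g F$ could vanish at $\th = 1$ and both the implicit function theorem and the monotonicity argument would break down. Everything else reduces to algebraic manipulation of \eqref{e:steady-state} together with the monotonicity established along the way.
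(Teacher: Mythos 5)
Your argument is correct and follows essentially the same route as the paper: implicit differentiation of \eqref{e:steady-state} to get $g' = g/\bigl[(p+1)g^p-(\th-1)\bigr]$, positivity of the bracket (you via $\a>0$ giving the strict bound $g^p>\th-1$, the paper via the rough bound \eqref{g-rough-bound} together with \eqref{e:gminmax} and a case split in $\th$), smoothness by the implicit function theorem rather than the paper's bootstrapping of the $g'$ formula, and the same final step of restricting $\int g\,\dpi$ to $\{\eta\geq\th\}$ and using monotonicity. These are only cosmetic variations; in particular your appeal to the dynamical lower bound \eqref{e:Ymin} to secure $g>0$ plays exactly the role of the paper's \eqref{e:gminmax}.
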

\begin{proof}
Since $g$ is Lipschitz we can conclude monotonicity from the sign of the derivative,
\begin{equation}\label{e:gprime}
g' = \dfrac{g}{1-\th + (p+1)g^p}.
\end{equation}
If $1 \geq  \th$, then using \eqref{e:gminmax}, it is clear that the denominator is positive, and so $g'>0$. If $1 < \th$  we have by the rough bound \eqref{g-rough-bound}
  \[
 1-\th + (p+1)g^p \geq p(\th - 1) > 0.
  \] 
This establishes monotonicity.  Also, since  the denominator of \eqref{e:gprime} is always positive, by bootstrapping this implies $g\in C^\infty([\thmin,\thmax])$. 

Combining monotonicity with the equation \eqref{e:steady-state} we obtain
\[
\int_{\{\eta \geq \th\}} g(\th) \dpi(\eta) - g(\th) +[\th- g^p(\th)]g(\th) \leq 0.
\]
Since $g(\th) \geq 0$ for all $\th \in [\thmin,\thmax]$ we must have
\[
\int_{\{\eta \geq \th\}} \dpi(\eta) - 1 +\th- g^p(\th) \leq 0.
\]
The estimate \eqref{g-low-bound} follows.
\end{proof}

Let us discuss convexity.   The second derivative of $g(\th)$ is given by
\[
g'' = \dfrac{ g'[1-\th +(p+1)g^p] -  g[-1 +  p(p+1)g^{p-1} g']}{[1-\th +(p+1)g^p]^2}
\]
and using \eqref{e:gprime} to replace $g'$ we obtain
\begin{equation}\label{g''}
g''= \dfrac{2(1-\th) g +(2+p-p^2)  g^{p+1}}{[1-\th +(p+1)g^p]^3}. 
\end{equation}
The denominator is always positive, and we note that in view of \eqref{g-rough-bound} the numerator is also positive regardless of the range of $\th$ provided $p\leq 1$. So, $g$ is globally convex in this case. 

In other cases, the convexity may change. In fact for $p=2$ we have
\[
g'' = \dfrac{2(1-\th) g}{[1-\th +3 g^2]^3}.
\]
So,  $\th = 1$ is an inflection point.

For $p> 2$, the solution has no more than one inflection point.  This can be seen by solving for $g'' = 0$ in \eqref{g''}. We have
\[
2(1- \th) = (p^2 - p - 2) g^p.
\]
The left hand side is a decreasing function and the right hand side is increasing for $p>2$. So, the two can meet at most at one point.

The exact value of $\a$ depends on $g$ and since the solution is in general not possible to compute explicitly we present in the figure below solutions to \eqref{e:steady-state} with several `passive' choices of $\a$ for illustration.

\begin{figure}[H]
\centering
\includegraphics[width=\textwidth, trim={0.5cm 12.5cm 0.5cm 2.5cm}, clip]{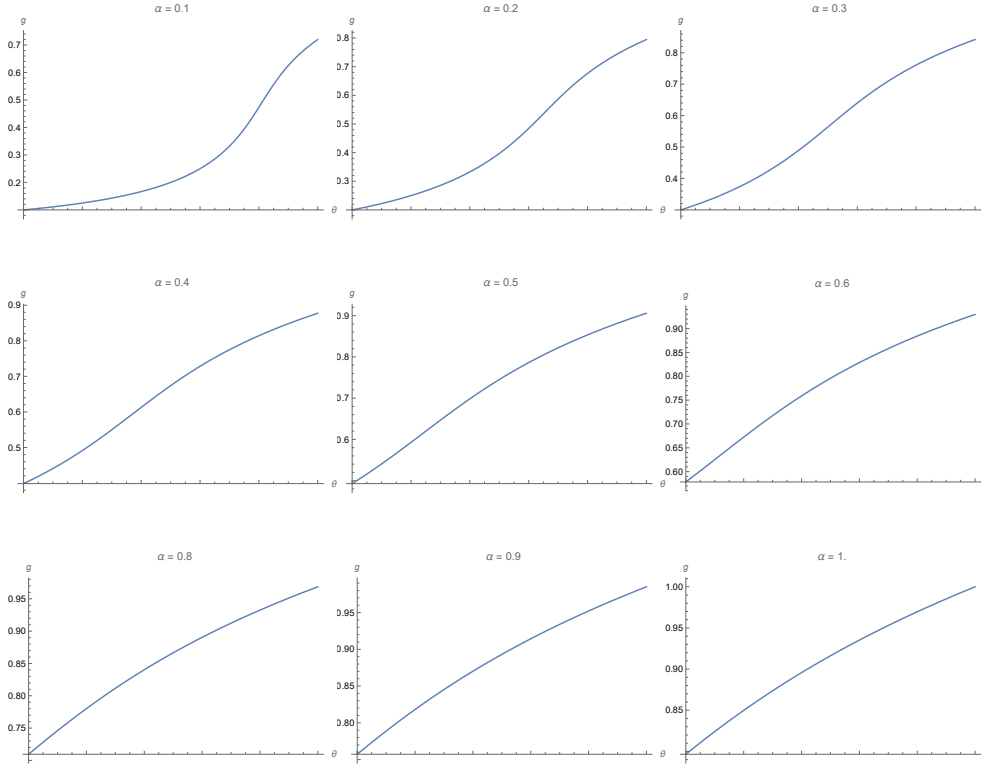}
\caption{The behavior of $g(\th)$ for the case $p=6$. Here $\th \in (0,1]$ and $\a$ change in $(0, 1]$ at discrete steps of $0.1$.}
\end{figure}


\newcommand{\etalchar}[1]{$^{#1}$}

\end{document}